\newtheorem{thm}{Theorem}[section]
\newtheorem{our_thm}{Theorem}
\newtheorem{lem}[thm]{Lemma}
\newtheorem{prop}[thm]{Proposition}
\newtheorem{clm}[thm]{Claim}
\theoremstyle{definition}
\newtheorem{defn}[thm]{Definition}
\theoremstyle{remark}
\newtheorem{rem}[thm]{Remark}
\newcommand{\sExp}[2]{\mathbf{E}_{#1}\left[#2\right]}
\newcommand{\Exp}[1]{\sExp{}{#1}}
\newcommand{\Prob}[1]{\Pr\left[#1\right]}
\newcommand{\cProb}[3]{\Pr_{#1}\left[ \left. #2 \;\right\vert #3 \right]}
\newcommand{\mA}{\mathcal{A}}
\newcommand{\mB}{\mathcal{B}}
\newcommand{\mC}{\mathcal{C}}
\newcommand{\mF}{\mathcal{F}}
\newcommand{\mP}{\mathcal{P}}
\newcommand{\mG}{\mathcal{G}}
\newcommand{\mL}{\mathcal{L}}
\newcommand{\mX}{\mathcal{X}}
\newcommand{\mD}{\mathcal{D}}
\newcommand{\mM}{\mathcal{M}}
\newcommand{\mS}{\mathcal{S}}
\newcommand{\hG}{\widehat{G}}
\newcommand{\hH}{\widehat{G}}
\newcommand{\Gnp}[2]{\mG(#1,#2)}
\newcommand{\GNp}[1]{\Gnp{n}{#1}}
\newcommand{\GNP}{\GNp{p}}
\newcommand{\GNM}{\GNp{M}}
\newcommand{\Bin}{\textrm{Bin}}
\newcommand{\tG}{\widetilde{G}}
\newcommand{\PerfMatch}{\mathcal{PM}}
\newcommand{\Hamiltonicity}{\mathcal{HAM}}
\newcommand{\EdgeConn}[1]{\mathcal{EC}_{#1}}
\newcommand{\VertConn}[1]{\mathcal{VC}_{#1}}
\newcommand{\Small}{\textsc{Small}}
\title{Hitting time results for Maker-Breaker games}
\author{
Sonny Ben-Shimon
\thanks{The Blavatnik School of Computer Science, Raymond and Beverly Sackler Faculty of Exact Sciences, Tel Aviv University, 69978, Israel. Email: sonny@post.tau.ac.il. Research partially supported by a Farajun Foundation Fellowship.}
\and Asaf Ferber
\thanks{School of Mathematical Sciences, Raymond and Beverly Sackler Faculty of Exact Sciences, Tel Aviv University, 69978, Israel. Email: ferberas@post.tau.ac.il}
\and Dan Hefetz
\thanks{Institute of Theoretical Computer Science, ETH Z\"urich, CH-8092 Switzerland. Email: dan.hefetz@inf.ethz.ch.}
\and Michael Krivelevich
\thanks{School of Mathematical Sciences, Raymond and Beverly Sackler Faculty of Exact Sciences, Tel Aviv University, Tel Aviv 69978, Israel. Email: krivelev@post.tau.ac.il. Research supported in part by USA-Israel BSF grant 2006322 and by grant 1063/08 from the Israel Science Foundation.}
}
\begin{document}
\maketitle

\begin{abstract}
We study Maker-Breaker games played on the edge set of a random graph. Specifically, we consider the random graph process and analyze the first time in a typical random graph process that Maker starts having a winning strategy for his final graph to admit some property $\mP$. We focus on three natural properties for Maker's graph, namely being $k$-vertex-connected, admitting a perfect matching, and being Hamiltonian. We prove the following optimal hitting time results: with high probability Maker wins the $k$-vertex connectivity game exactly at the time the random graph process first reaches minimum degree $2k$; with high probability Maker wins the perfect matching game exactly at the time the random graph process first reaches minimum degree $2$; with high probability Maker wins the Hamiltonicity game exactly at the time the random graph process first reaches minimum degree $4$. The latter two statements settle conjectures of Stojakovi\'{c} and Szab\'{o}.
\end{abstract}

\section{Introduction}
Let $X$ be a finite set and let ${\mF} \subseteq 2^X$ be a family of subsets. In the positional game $(X,{\mF})$, two players take turns in claiming one previously unclaimed element of $X$ and the game ends when all of the elements of $X$ have been claimed by either of the players. The set $X$ is often referred to as the \emph{board} of the game. Positional games have attracted a lot of attention in the past decade and a thorough introduction to this field with a plethora of results can be found in a recent monograph of Beck \cite{Bec2008}. In a {\em Maker-Breaker}-type positional game, the two players are called \emph{Maker} and \emph{Breaker} and the members of ${\mF}$ are referred to as the \emph{winning sets}. Maker wins the game if he occupies all elements of some winning set; otherwise Breaker wins. We will always assume that Breaker starts the game. We say that a game $(X,{\mF})$ is a {\em Maker's win} if Maker has a strategy (that can be adaptive to Breaker's moves) that ensures his win in this game against any strategy of Breaker, otherwise the game is a {\em Breaker's win}. Note that $X$ and ${\mF}$ alone determine whether the game is a Maker's win or a Breaker's win. A classical example of this Maker-Breaker setting is the popular board game HEX.

\subsection{Maker-Breaker games on graphs}
Let $G=(V,E)$ be a graph and let $\mP$ be a monotone increasing graph property on $V$ (a family of graphs on $V$, closed under isomorphism and addition of edges). Consider the Maker-Breaker game $(E, \mF_{\mP})$ played on the edge set $E$ as the board of the game. The game is a win for Maker if and only if the graph spanned by the edges selected by Maker throughout the game satisfies the property $\mP$. We denote the family of graphs $G$ for which the $(E(G), \mF_{\mP})$ game is a Maker's win by $\mM_{\mP}$. Although the above game is described in game-theoretic terms, it should be noted that these games are finite perfect information games with no chance moves, and $\mM_{\mP}$ is some graph property which clearly satisfies $\mM_{\mP}\subseteq \mP$. Moreover, since $\mP$ is monotone increasing, $\mM_{\mP}$ is clearly monotone increasing as well. By considering monotone increasing graph properties, the game can be terminated as soon as the graph spanned by Maker's edges satisfies the property, regardless of whether all edges have been claimed or not. This leads to several natural questions. First, how sparse can a graph $G\in\mM_{\mP}$ be? In this context, playing on random graphs (where the density of the graph is chosen according to the property at hand) becomes very natural. This setting was formally initiated in \cite{StoSza2005} by Stojakovi\'{c} and Szab\'{o}, and this current work is a further exploration of it. Second, one can also study the minimum number of moves needed for Maker in order to win the game (see e.g. \cite{Bec81, Pek96, FelKri2008, HefEtAl2009b, HefSti2009}), but ``winning fast'' is not in the focus of this current work.

\subsection{Random graphs}\label{ss:RandomGraphs}
The most widely used random graph model is the Binomial random graph, $\GNP$. In this model we start with $n$ vertices, labeled, say, by $V=\{1,\ldots,n\}=[n]$, and select a graph on these $n$ vertices by going over all $\binom{n}{2}$ pairs of vertices, deciding independently with probability $p$ for a pair to be an edge. The model $\GNP$ is thus a probability space of all labeled graphs on the vertex set $[n]$ where the probability of such a graph, $G=([n],E)$, to be selected is $p^{|E|}(1-p)^{\binom{n}{2}-|E|}$. This product probability space provides us with a wide variety of probabilistic tools for analyzing the behavior of various random graph properties. (See monographs \cite{Bol2001} and \cite{JanLucRuc2000} for a thorough introduction to the subject of random graphs). In the subsequent sections we will need at some point to employ a slightly generalized model. Let $F\subseteq \binom{V}{2}$ be an arbitrary subset and let $\GNP_{-F} := \GNP \setminus F$.

Although the Binomial random graph model is very natural and relatively easy to use, it was not the first model to be considered. In their seminal paper, Erd\H{o}s and R\'{e}nyi considered the uniform probability space over all graphs on a fixed set of vertices with exactly $M$ edges, $\GNM$. Note that for any value of $p$, if we condition the random graph $\GNP$ to have exactly $M$ edges, then we obtain exactly the Erd\H{o}s-R\'{e}nyi random graph model. The similarity of the two models enables us to prove the occurrence of events in the $\GNP$ model and get the corresponding result in the $\GNM$ model.
\begin{prop}[\cite{JanLucRuc2000}, Proposition 1.13]\label{p:MonoGNPtoGNM}
Let $\mP = \mP(n)$ be a sequence of monotone increasing graph properties, $0\leq a\leq 1$ and $0\leq M\leq\binom{n}{2}$ be an integer. If for every sequence $p=p(n)\in[0,1]$ such that $p=M/\binom{n}{2}\pm O\left(M\left(\binom{n}{2}-M\right)/\binom{n}{2}^3\right)$ it holds that $\lim_{n\rightarrow\infty}\Prob{\GNP\in \mP}=a$, then $\lim_{n\rightarrow\infty}\Prob{\GNM\in \mP}=a$.
\end{prop}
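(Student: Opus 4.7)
The plan is to apply the standard ``sandwiching'' argument relating $\GNM$ to $\GNP$. The starting observation is that conditioning $\GNP$ on having exactly $M'$ edges yields the uniform distribution on graphs with $M'$ edges, namely $\mG(n,M')$; hence $\Prob{\GNP\in\mP}=\sum_{M'}\Prob{|E(\GNP)|=M'}\cdot\Prob{\mG(n,M')\in\mP}$. Crucially, since $\mP$ is monotone increasing, the map $M'\mapsto\Prob{\mG(n,M')\in\mP}$ is non-decreasing, and this monotonicity will drive both inequalities in the argument.

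Next I would pick two values $p_-<M/\binom{n}{2}<p_+$ lying in the prescribed $O$-range around $M/\binom{n}{2}$ and additionally satisfying $|E(\mG(n,p_-))|\leq M$ w.h.p.\ and $|E(\mG(n,p_+))|\geq M$ w.h.p. Since $|E(\GNP)|\sim\Bin(\binom{n}{2},p)$, this one-sided concentration is a routine application of Chebyshev's (or Chernoff's) inequality. With such $p_\pm$ in hand I would split the conditioning identity twice: applied to $p_+$, restricting the sum to $M'\geq M$ and using monotonicity gives $\Prob{\mG(n,p_+)\in\mP}\geq(1-o(1))\cdot\Prob{\GNM\in\mP}$, so $\limsup_n\Prob{\GNM\in\mP}\leq a$; applied to $p_-$, bounding $\Prob{\mG(n,M')\in\mP}\leq 1$ on the tail $M'>M$ gives $\Prob{\mG(n,p_-)\in\mP}\leq\Prob{\GNM\in\mP}+o(1)$, so $\liminf_n\Prob{\GNM\in\mP}\geq a$. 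Combining the two yields the desired limit.

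The main obstacle is the compatibility check: one must simultaneously choose $p_\pm$ inside the stipulated $O$-window around $M/\binom{n}{2}$ and far enough from $M/\binom{n}{2}$ to force the binomial $|E(\GNP)|$ to concentrate on the correct side of $M$. This amounts to comparing the width of the prescribed window to the standard deviation $\sqrt{\binom{n}{2}p(1-p)}$ of the edge count, and it requires some care in the boundary regimes where $M$ is extremely small or extremely close to $\binom{n}{2}$; in those regimes the edge count is already very sharply concentrated for trivial reasons, so the sandwich still closes.
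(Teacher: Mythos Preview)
The paper does not prove this proposition; it is quoted verbatim from \cite{JanLucRuc2000} (Janson--\L{}uczak--Ruci\'nski) and used as a black box. Your sandwiching argument---the conditioning identity together with the monotonicity of $M'\mapsto\Prob{\mG(n,M')\in\mP}$, applied once with $p_-$ and once with $p_+$---is precisely the standard proof given in that reference, so there is nothing in the present paper to compare it against.

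One remark on the ``compatibility check'' you flag. Writing $N=\binom{n}{2}$, the window in the paper's restatement is $O\bigl(M(N-M)/N^{3}\bigr)$, while the standard deviation of the empirical edge density $|E(\GNP)|/N$ is of order $\sqrt{M(N-M)/N^{3}}$. Since $M(N-M)/N^{3}\le 1/(4N)\to 0$, the stated window is of \emph{smaller} order than this standard deviation, so one cannot in general pick $p_\pm$ inside it and still force $|E(\mG(n,p_\pm))|$ to concentrate on the desired side of $M$. This is almost certainly a transcription slip in the paper's restatement (the window in the source is $O\bigl(\sqrt{M(N-M)/N^{3}}\,\bigr)$, i.e.\ the standard deviation itself), not a flaw in your plan; but be aware that the comparison you describe as ``requires some care'' actually fails for the window as literally written here, rather than merely being delicate at the boundary.
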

The converse result to Proposition \ref{p:MonoGNPtoGNM} holds\footnote{In fact, when moving from $\GNM$ to $\GNP$ the monotonicity requirement is not necessary.} as well (see e.g. Proposition 1.12 in \cite{JanLucRuc2000}); this enables us to transfer results from one model to the other. Unfortunately, not all properties we will encounter and explore are monotone increasing, and hence Proposition \ref{p:MonoGNPtoGNM} cannot be used in those cases. Nonetheless, we would like to take advantage of the ``ease'' of calculations in the $\GNP$ model (due to the independence of appearance of its edges), and transfer the results to the $\GNM$ model, for the appropriate values of $M$. To achieve this we will use this somewhat crude estimate, which will suffice for our purposes.
\begin{clm}\label{c:GNPtoGNM}
Let $\mP$ be a property of graphs on $n$ vertices and let $1\leq M\leq\binom{n}{2}$ be an integer. Setting $p=M/\binom{n}{2}$ we have
$$\Prob{\GNM\in\mP}\leq\sqrt{2\pi M}\cdot\Prob{\GNP\in\mP}.$$
\end{clm}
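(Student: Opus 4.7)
The plan is to condition $\GNP$ on its number of edges and to exploit the exact identity between $\GNP$ conditioned on having $k$ edges and $\GNp{n}{k}$. Writing $N = \binom{n}{2}$, this gives
\[
\Prob{\GNP \in \mP} \;=\; \sum_{k=0}^{N}\Prob{\GNp{n}{k} \in \mP}\cdot \Prob{\Bin(N,p)=k} \;\geq\; \Prob{\GNM \in \mP}\cdot \Prob{\Bin(N,p)=M},
\]
simply by retaining the $k=M$ summand. Crucially, no monotonicity of $\mP$ is used, which is exactly the weakness of Proposition \ref{p:MonoGNPtoGNM} that this claim is designed to bypass. Rearranging reduces the claim to the point-probability estimate
\[
\Prob{\Bin(N,p)=M} \;\geq\; \frac{1}{\sqrt{2\pi M}}.
\]

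For this binomial lower bound I would choose $p = M/N$, which makes $M$ the mean (hence a mode) of $\Bin(N,p)$, and write
\[
\Prob{\Bin(N,p)=M} \;=\; \binom{N}{M}\!\left(\frac{M}{N}\right)^{\!M}\!\left(\frac{N-M}{N}\right)^{\!N-M}.
\]
Inserting Stirling's formula in the explicit two-sided form $\sqrt{2\pi k}(k/e)^k \leq k! \leq \sqrt{2\pi k}(k/e)^k e^{1/(12k)}$ into each of $N!$, $M!$, $(N-M)!$, the $(k/e)^k$ factors cancel exactly against the $M^M(N-M)^{N-M}/N^N$ arising from the binomial, leaving
\[
\Prob{\Bin(N,p)=M} \;=\; \frac{1}{\sqrt{2\pi}}\sqrt{\frac{N}{M(N-M)}}\cdot (1+o(1)).
\]
Since $N \geq N-M$, the prefactor is already at least $1/\sqrt{2\pi M}$, so up to the Stirling remainder the claim is immediate.

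The main (entirely routine) obstacle is bookkeeping the direction of the Stirling remainder: the lower estimate $k! \geq \sqrt{2\pi k}(k/e)^k$ must be used on the $N!$ in the numerator and the upper estimate on the two factorials in the denominator, so that the residual error is $e^{-1/(12M)-1/(12(N-M))}$. For $M$ bounded away from $1$ this is absorbed by the slack in $\sqrt{N/(N-M)} \geq 1$; the few small values of $M$ can be checked by hand, and in any event the inequality is trivial whenever $\Prob{\GNP\in\mP} \geq 1/\sqrt{2\pi M}$. No structural argument beyond the conditioning identity is required.
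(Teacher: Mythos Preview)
Your approach is essentially the same as the paper's: both condition $\GNP$ on its edge count, isolate the $k=M$ term, and reduce to the point-probability estimate $\Prob{\Bin(N,p)=M}\geq 1/\sqrt{2\pi M}$ via Stirling's formula. Your treatment of the Stirling remainder is in fact more careful than the paper's, which simply writes the inequality without tracking the error factor; otherwise the arguments coincide.
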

\begin{proof}
Let $G \sim \GNM$ and $G' \sim \GNP$ where $1 \leq M \leq \binom{n}{2}$ is an integer and $p=M/\binom{n}{2}$. As was previously noted, we have $\cProb{}{G'\in\mP}{e(G')=M}=\Prob{G\in\mP}$. Next, we lower bound the probability the Binomial random graph will span exactly its expected number of edges using Stirling's formula. Let $N=\binom{n}{2}$, then
\begin{eqnarray*}
\Prob{e(\GNP)=M}&=&\binom{N}{M}p^M(1-p)^{N-M}>\frac{N^N\cdot p^M(1-p)^{N-M}}{M^M\cdot(N-M)^{N-M}}\cdot\frac{\sqrt{2\pi N}}{\sqrt{2\pi M}\cdot\sqrt{2\pi (N-M)}}>\frac{1}{\sqrt{2\pi M}}.
\end{eqnarray*}
Putting this together we have that
\begin{eqnarray*}
\Prob{G\in\mP}&=&\cProb{}{G'\in\mP}{e(G')=M}
=\frac{\Prob{G'\in\mP\wedge e(G')=M}}{\Prob{e(G')=M}}
\leq\frac{\Prob{G'\in\mP}}{\Prob{e(G')=M}}\\
&\leq&\sqrt{2\pi M}\cdot\Prob{G'\in\mP}.
\end{eqnarray*}
\end{proof}

Next, we consider the following generation process of graphs. Given
a set $V$ of $n$ vertices and an ordering on the pairs of vertices
$\pi:\binom{V}{2}\rightarrow\left[\binom{n}{2}\right]$, we define a
\emph{graph process} to be a sequence of graphs $\tG =
\tG(\pi)=\{G_t\}^{\binom{n}{2}}_{t=0}$ on $V$. Starting with
$G_0=(V,\emptyset)$, for every integer $1\leq t\leq \binom{n}{2}$,
the graph $G_t$ is defined by $G_t:=G_{t-1}\cup\pi^{-1}(t)$. For a
given graph process $\tG$ on $V$, we define the \emph{hitting time}
of a monotone increasing graph property $\mP$ on $V$ as
\begin{equation}\label{e:hitting_time}
\tau(\tG;\mP)=\min\{t\;:\;G_t\in \mP\}.
\end{equation}
When selecting $\pi$ uniformly at random, the process $\tG(\pi)$ is usually called the \emph{random graph process}. If $\tG=\{G_t\}_{t=0}^{\binom{n}{2}}$ is the random graph process, then, for every $0\leq M\leq\binom{n}{2}$, the graph $G_M$ is distributed according to $\GNM$, that is, $G_M\sim\GNM$. This entails that analyzing the hitting time of a monotone increasing property $\mP$ is in fact a refinement of the study of values of $M$ and $p$ for which $\GNM\in\mP$ and $\GNP\in\mP$ respectively (where to get the values of $p$ we need to use the converse of Proposition \ref{p:MonoGNPtoGNM} as stated above).

For every positive integer $k$ let $\delta_k$ denote the graph property of having minimum degree at least $k$, let $\EdgeConn{k}$ denote the graph property of being $k$-edge connected, let $\VertConn{k}$ denote the graph property of being $k$-vertex connected, and let $\Hamiltonicity$ denote the graph property of admitting a Hamilton cycle. Two cornerstone results in the theory of random graphs are that of Bollob\'{a}s and Thomason \cite{BolTho85} who proved that for every $1\leq k\leq n-1$, with high probability (or w.h.p. for brevity)\footnote{In this paper, we say that a sequence of events $\mA_n$ in a random graph model occurs w.h.p. if the probability of $\mA_n$ tends to 1 as the number of vertices $n$ tends to infinity.} $\tau(\tG;\delta_{k})=\tau(\tG;\EdgeConn{k})=\tau(\tG;\VertConn{k})$, and that of Koml\'os and Szemer\'edi \cite{KomSze83} who proved that w.h.p. $\tau(\tG;\delta_{2})=\tau(\tG;\Hamiltonicity)$ (see also \cite{Bol84}). Note that these two results (and many other which have succeeded) provide a very strong indication that the ``bottleneck'' for such properties in random graphs is in fact the vertices of minimum degree. The results of this paper are of the very same nature.

\subsection{Motivation and previous results}
Given a graph $G$ with minimum degree at most $2k-1$ Breaker can keep claiming edges incident to some vertex of minimum degree, and with the advantage of playing first will leave Maker with a graph containing a vertex of degree at most $k-1$. This implies that Breaker wins the $k$-edge-connectivity game $(E(G), \mF_{\EdgeConn{k}})$ for such graphs, and therefore $\tau(\tG;\mM_{\EdgeConn{k}})\geq\tau(\tG;\delta_{2k})$ for every graph process $\tG$. In \cite{StoSza2005} Stojakovi\'{c} and Szab\'{o} were the first to consider Maker-Breaker games played on random graphs. By combining theorems of Lehman \cite{Leh64} and of Palmer and Spencer \cite{PalSpe95}, they observed that for every fixed positive integer $k$, if $\tG$ is the random graph process, then w.h.p. $\tau(\tG;\mM_{\EdgeConn{k}})=\tau(\tG;\delta_{2k})$, thus providing a very precise hitting time result for the edge-connectivity game\footnote{In \cite{StoSza2005} only the case of $k=1$ is explicitly mentioned, but it can be generalized for any positive integer $k$ in a straightforward manner.}. Similarly to the edge-connectivity case we have that for every graph process $\tG$
\begin{equation}\label{e:lowbound_conn_game}
\tau(\tG;\delta_{2k})\leq\tau(\tG;\mM_{\VertConn{k}}).
\end{equation}
Let $\PerfMatch$ denote the graph property of admitting a matching of size $\lfloor n/2\rfloor $ in a graph on $n$ vertices. Every graph $G$ on an \emph{even} number of vertices with minimum degree at most $1$ is a win for Breaker in the perfect matching game $(E(G),\mF_{\PerfMatch})$. Hence, for every graph process $\tG$ on an even number of vertices
\begin{equation}\label{e:lowbound_perfmatch}
\tau(\tG;\delta_{2})\leq\tau(\tG;\mM_{\PerfMatch}).
\end{equation}

In \cite{StoSza2005} Stojakovi\'{c} and Szab\'{o} conjectured that if $\tG$ is the random graph process, then w.h.p. equality holds in \eqref{e:lowbound_perfmatch}. Although they did not prove this conjecture, in \cite{StoSza2005} they proved that if $p>\frac{64\ln n}{n}$, then w.h.p. $\GNP\in\mM_{\PerfMatch}$. Note that this result is optimal in $p$ up to multiplicative constant factor, for if $p\leq\frac{\ln n + \ln\ln n - \omega(1)}{n}$, where $\omega(1)$ is some function which tends to infinity with $n$ arbitrarily slowly, then w.h.p. $\delta(\GNP) \leq 1$, and hence by \eqref{e:lowbound_perfmatch}, w.h.p. $\GNP\notin\mM_\PerfMatch$.

Clearly, every graph $G$ with minimum degree at most $3$ is a win for Breaker in the Hamiltonicity game $(E(G), \mF_{\Hamiltonicity})$. Hence, we have that for every graph process $\tG$
\begin{equation}\label{e:lowbound_hamilton}
\tau(\tG;\delta_{4})\leq\tau(\tG;\mM_{\Hamiltonicity}).
\end{equation}
In \cite{StoSza2005} Stojakovi\'{c} and Szab\'{o} conjectured that if $\tG$ is the random graph process, then w.h.p. equality holds in \eqref{e:lowbound_hamilton}.

One of the first results in the field of Maker-Breaker games on graphs is due to Chv\'{a}tal and Erd\H{o}s in their seminal paper \cite{ChvErd78},  which states that $K_n\in \mM_\Hamiltonicity$ for sufficiently large values of $n$ (in~\cite{HefSti2009} the third author and Stich proved that $n\geq38$ suffices). The problem of finding sparse graphs which are a win for Maker was addressed by Hefetz et. al. \cite{HefEtAlPre} where they showed that, for sufficiently large values of $n$, there exists a graph $G\in \mM_\Hamiltonicity$ on $n$ vertices with $e(G)\leq 21n$. Playing the Hamiltonicity game $(E(G), \mF_{\Hamiltonicity})$ on the random graph $\GNP$ was first considered in the original paper of Stojakovi\'{c} and Szab\'{o} \cite{StoSza2005} where they proved that if $p>\frac{32\ln n}{\sqrt n}$, then w.h.p. $\GNP\in\mM_\Hamiltonicity$. Later, Stojakovi\'{c} \cite{Sto2005} found the correct order of magnitude proving that $p>5.4\ln n/n$ suffices for $\GNP$ to be w.h.p. Maker's win in the Hamiltonicity game. This requirement on $p$ was subsequently improved to $p\geq\frac{\ln n + (\ln\ln n)^s}{n}$, where $s$ is some large but fixed constant, by Hefetz et. al. \cite{HefEtAl2009}. Note that this result is very close to being optimal, for if $p=\frac{\ln n + 3\ln\ln n - \omega(1)}{n}$, where $\omega(1)$ is some function which tends to infinity with $n$ arbitrarily slowly, then w.h.p. $\delta(\GNP)<4$ and hence by \eqref{e:lowbound_hamilton} w.h.p. $\GNP\notin\mM_\Hamiltonicity$. Lastly, in \cite{BenKriSudPre} the first and fourth authors with Sudakov studied the Hamiltonicity game played on the edges of random regular graphs (the uniform probability measure over all $d$-regular graphs on a fixed vertex set) and proved that for large enough constant values of $d$ this game is Maker's win.

\subsection{Our results}
In this paper we address the above mentioned Maker-Breaker games on random graphs, namely when Maker's goal is to build graphs which satisfy the properties of being $k$-vertex connected, admitting a perfect matching, and being Hamiltonian. Specifically, the main objective of this paper is to prove that the trivial minimum degree requirement as stated in \eqref{e:lowbound_conn_game}, \eqref{e:lowbound_perfmatch}, and \eqref{e:lowbound_hamilton} is actually the bottleneck for a typical random graph to be a win for Maker in all of the above mentioned games. The following results will thus be proved.
\begin{our_thm}\label{t:kconn_game_hittime}
For every fixed integer $k\geq 1$, if $\tG$ is the random graph process, then w.h.p.
$$\tau(\tG;\mM_{\VertConn{k}})=\tau(\tG;\delta_{2k}).$$
\end{our_thm}
For every positive integer $k$ it holds that $\VertConn{k}\subseteq\EdgeConn{k}$, hence Theorem \ref{t:kconn_game_hittime} is in fact an improvement of the aforementioned result of Stojakovi\'{c} and Szab\'{o} in \cite{StoSza2005}. We also note that, by using the theorem of Lehman \cite{Leh64}, we can get the result of Palmer and Spencer \cite{PalSpe95} for even values of $k$ as a corollary of Theorem \ref{t:kconn_game_hittime}.

The following result for the prefect matching game is also proved.
\begin{our_thm}\label{t:perf_match_game_hittime}
If $\tG$ is the random graph process on an even number of vertices, then w.h.p.
$$\tau(\tG;\mM_{\PerfMatch})=\tau(\tG;\delta_{2}).$$
\end{our_thm}
Theorem \ref{t:perf_match_game_hittime} settles a conjecture raised in \cite{StoSza2005}. By the connection between the random graph models as described in Section \ref{ss:RandomGraphs} and by known results on the distribution of the minimum degree of $\GNP$, Theorem \ref{t:perf_match_game_hittime} implies that w.h.p. $\GNP\in\mM_\PerfMatch$ for every $p\geq\frac{\ln n + \ln\ln n+\omega(1)}{n}$, where $\omega(1)$ tends arbitrarily slowly to infinity with $n$, improving on the result of Stojakovi\'{c} and Szab\'{o} in \cite{StoSza2005}.
\begin{our_thm}\label{t:hamilton_game_hittime}
If $\tG$ is the random graph process, then w.h.p.
$$\tau(\tG;\mM_{\Hamiltonicity})=\tau(\tG;\delta_{4}).$$
\end{our_thm}
Theorem \ref{t:hamilton_game_hittime} settles a conjecture raised in \cite{StoSza2005}. Moreover, similarly to the above, Theorem \ref{t:hamilton_game_hittime} improves on the result of Hefetz et. al. in \cite{HefEtAl2009} by implying that w.h.p. $\GNP\in\mM_\Hamiltonicity$ for every $p\geq\frac{\ln n + 3\ln\ln n+\omega(1)}{n}$, where $\omega(1)$ tends arbitrarily slowly to infinity with $n$.

\subsection{Organization}
The rest of the paper is organized as follows. In Section \ref{s:Preliminaries} we provide some preliminary technical results about positional games, expanders, and random graphs, which will be needed in the course of our proofs. Section \ref{s:expander_game} is devoted to the analysis of a general game in which Maker's goal is to build an expander graph. This will give us a framework from which we can build on to prove the concrete results on the more natural games mentioned above. In Section \ref{s:Random} we prove some properties of random graphs and random graph processes that will be useful in the proofs of our main results. We then move on to provide the full proofs of Theorems \ref{t:kconn_game_hittime} and \ref{t:perf_match_game_hittime} in Section \ref{s:HitTimePerfMatchKConn}. These proofs will rely heavily on the general expander game and the properties of random graphs and random graph processes which we discussed in the preceding two sections. In Section \ref{s:HitTimeHam} we move on to the proof of Theorem \ref{t:hamilton_game_hittime}, which is more delicate than the previous two and requires some more ideas to get the result in full. Lastly, we discuss some further generalizations and sketch their proofs in Section \ref{s:Generalizations}.

\section{Preliminaries} \label{s:Preliminaries}
In this section we cite some tools which we will make use of in the succeeding sections. First,  we will need to employ bounds on large deviations of random variables. We will mostly use the following well-known bound on the lower and the upper tails of the Binomial distribution due to Chernoff (see e.g. \cite[Appendix A]{AloSpe2008}).
\begin{thm}[Chernoff bounds]\label{t:Chernoff}
If $X\sim B(n,p)$ then
\begin{enumerate}
\item $\Prob{X < (1-\varepsilon)np}<\exp(-\frac{\varepsilon^2np}{2})$ for every $\varepsilon>0$;
\item $\Prob{X >(1+\varepsilon)np}<\exp(-\frac{np}{3})$ for every $\varepsilon\geq 1$.
\end{enumerate}
\end{thm}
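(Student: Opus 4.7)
The plan is to follow the standard exponential moment (Chernoff) method. Writing $X = \sum_{i=1}^n X_i$ as a sum of i.i.d.\ Bernoulli$(p)$ variables, I would first compute the moment generating function: for any real $\lambda$,
$$\Exp{e^{\lambda X}} = (1-p+pe^\lambda)^n \leq \exp\bigl(np(e^\lambda - 1)\bigr),$$
using $1+x \leq e^x$. Markov's inequality applied to $e^{\lambda X}$ then yields, for $\lambda > 0$,
$$\Prob{X \geq a} \leq \exp\bigl(np(e^\lambda - 1) - \lambda a\bigr),$$
with an analogous bound for the lower tail when $\lambda < 0$; one then optimizes $\lambda$ depending on the choice of $a$.

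For part (2), I would take $a = (1+\varepsilon)np$ with $\varepsilon \geq 1$. Optimizing the exponent gives $\lambda = \ln(1+\varepsilon)$, producing
$$\Prob{X > (1+\varepsilon)np} \leq \exp\bigl(-np\cdot \varphi(\varepsilon)\bigr), \qquad \varphi(\varepsilon) := (1+\varepsilon)\ln(1+\varepsilon) - \varepsilon.$$
It then suffices to verify $\varphi(\varepsilon) \geq 1/3$ for every $\varepsilon \geq 1$: since $\varphi'(\varepsilon) = \ln(1+\varepsilon) > 0$ on $(0,\infty)$, the function $\varphi$ is increasing, and $\varphi(1) = 2\ln 2 - 1 > 1/3$, so the bound follows.

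For part (1), I would take $a = (1-\varepsilon)np$ and apply the method with $\lambda < 0$. Optimization at $\lambda = \ln(1-\varepsilon)$ yields
$$\Prob{X < (1-\varepsilon)np} \leq \exp\bigl(-np\cdot\psi(\varepsilon)\bigr), \qquad \psi(\varepsilon) := (1-\varepsilon)\ln(1-\varepsilon) + \varepsilon,$$
for $\varepsilon \in (0,1)$ (for $\varepsilon \geq 1$ the probability is zero and the bound is trivial). The remaining step is the scalar inequality $\psi(\varepsilon) \geq \varepsilon^2/2$, which I would obtain from the power series $-\ln(1-\varepsilon) = \sum_{k \geq 1} \varepsilon^k/k$: a short manipulation gives $\psi(\varepsilon) = \sum_{k \geq 2} \varepsilon^k / (k(k-1))$, whose $k=2$ term equals $\varepsilon^2/2$ exactly, so $\psi(\varepsilon) - \varepsilon^2/2 = \sum_{k \geq 3} \varepsilon^k/(k(k-1)) \geq 0$ on $(0,1)$.

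The main obstacle, if any, is the bookkeeping for the two elementary scalar inequalities on $\varphi$ and $\psi$; no deeper ideas are required, as this is a textbook application of Chernoff's method, and the slight inelegance of stating the upper tail as $\exp(-np/3)$ (rather than the sharper $\exp(-np\varphi(\varepsilon))$) is precisely what makes the numerical verification at $\varepsilon = 1$ sufficient.
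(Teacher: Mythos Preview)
Your argument is correct and is precisely the standard exponential-moment derivation of the Chernoff bounds. Note, however, that the paper does not prove this theorem at all: it is stated as a well-known result with a reference to \cite[Appendix A]{AloSpe2008}, so there is no ``paper's own proof'' to compare against. Your write-up would serve perfectly well as a self-contained proof of the cited statement.
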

It will sometimes be more convenient to use the following bound on the upper tail of the Binomial distribution.
\begin{lem}\label{l:BeckChe}
If $X \sim Bin(n,p)$ and $k \geq np$, then $\Prob{X \geq k} \leq (enp/k)^k$.
\end{lem}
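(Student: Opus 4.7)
The plan is to derive this upper-tail estimate via a clean union bound, avoiding the need to optimize any exponential moment (though a Chernoff-style argument with $t=\ln(k/(np))\geq 0$ would yield the same estimate).

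Write $X=X_1+\ldots+X_n$ as a sum of $n$ independent Bernoulli$(p)$ indicators. The event $\{X\geq k\}$ is exactly the event that there exists a $k$-element subset $S\subseteq[n]$ with $X_i=1$ for every $i\in S$. For each such $S$, this event has probability $p^k$, so applying the union bound over all $\binom{n}{k}$ choices of $S$ yields
$$\Prob{X\geq k} \;\leq\; \binom{n}{k}\,p^k.$$
Combining this with the standard estimate $\binom{n}{k}\leq (en/k)^k$, which follows from $k!\geq (k/e)^k$, gives the claimed bound $(enp/k)^k$.

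There is no genuine obstacle here: the two ingredients are elementary. The hypothesis $k\geq np$ plays no role in the derivation itself; its purpose is to guarantee that the right-hand side $(enp/k)^k$ is at most $e^k$ (and in particular not a vacuous bound exceeding $1$), which is what makes the estimate useful in the applications to follow in the paper.
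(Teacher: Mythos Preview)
Your proof is correct. The paper itself does not prove this lemma; it is stated as a standard tail bound without proof, so there is no ``paper's own proof'' to compare against. Your union-bound argument via $\Prob{X\geq k}\leq\binom{n}{k}p^k\leq(en/k)^k p^k$ is the most common way to establish this inequality, and your remark about the role of the hypothesis $k\geq np$ is accurate.
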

Note that the bound given in Lemma \ref{l:BeckChe} is especially useful when $k$ is ``much larger'' than $np$.

For the sake of simplicity and clarity of presentation, we do not make a particular effort to optimize the constants obtained in our proofs. We also omit floor and ceiling signs whenever these are not crucial. Most of our results are asymptotic in nature and whenever necessary we assume that $n$ is sufficiently large.

\subsection{Notation}
Our graph-theoretic notation is standard and follows that of  \cite{Wes2001}. In particular, we use the following. For a graph $G$, let $V(G)$ and $E(G)$ denote its sets of vertices and edges respectively, and let $e(G) = |E(G)|$. For a set $A \subseteq V(G)$, let $E_G(A)$ denote the set of edges of $G$ with both endpoints in $A$, and let $e_G(A) = |E_G(A)|$. For disjoint sets $A,B\subseteq V(G)$, let $E_G(A,B)$ denote the set of edges of $G$ with one endpoint in $A$ and the other in $B$, and let $e_G(A,B)=|E_G(A,B)|$. For a set $S \subseteq V(G)$, let $N_G(S) = \{u\in V(G)\setminus S:\exists v \in S, \{u,v\} \in E(G)\}$ denote the set of neighbors of $S$ in $V(G)\setminus S$. For a vertex $w \in V(G)$, we abbreviate $N_G(\{w\})$ to $N_G(w)$. For a vertex $w\in V(G) \setminus S$ let $d_G(w,S) = |\{u \in S : \{ u,w\} \in E(G)\}|$ denote the number of vertices of $S$ that are adjacent to $w$ in $G$. We abbreviate $d_G(w,V \setminus \{w\})$ to $d_G(w)$ which denotes the degree of $w$ in $G$. The minimum vertex degree in $G$ is denoted by $\delta(G)$. For a set $S \subseteq V(G)$ let $G[S]$ denote the subgraph of $G$ with vertex set $S$ and edge set $E_G(S)$. Let $c(G)$ and $o(G)$ respectively denote the number of connected components and the number of connected components of odd cardinality in $G$. Lastly, we will denote by $\ell(G)$ the length of a longest path in $G$, where the length of a path is the number of its edges.

\subsection{Basic positional games results}
The following theorem is a  classical result of Erd\H{o}s and Selfridge \cite{ErdSel73} which provides a useful sufficient condition for Breaker's win in the $(X, {\mF})$ game.
\begin{thm}[Erd\H{o}s and Selfridge \cite{ErdSel73}]\label{t:ErdSel}
For any hypergraph $(X,{\mF})$, if
$$\sum_{A\in \mF} 2^{-|A|} < \frac{1}{2},$$
then Breaker, playing as the first or second player, has a winning strategy for the $(X,{\mF})$ game.
\end{thm}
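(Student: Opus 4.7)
The plan is to use a potential-function argument, which is the classical approach for proving such weighted-covering sufficient conditions. At any position in the game, let $M$ denote the set of elements claimed by Maker so far and let $B$ denote the set of elements claimed by Breaker. For each winning set $A \in \mF$, define its weight
$$w(A) = \begin{cases} 2^{-|A \setminus M|} & \text{if } A \cap B = \emptyset, \\ 0 & \text{otherwise,} \end{cases}$$
and let $\Phi = \sum_{A \in \mF} w(A)$ be the total potential. I would prescribe the following greedy strategy for Breaker: on each of his turns, claim an unclaimed element $y$ that maximizes $\sum_{A \ni y,\ A \cap B = \emptyset} 2^{-|A \setminus M|}$, namely the drop in $\Phi$ caused by blocking winning sets through $y$.

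The crux of the argument is to show that $\Phi$ stays strictly below $1$ throughout the game. First I would analyze a single round consisting of Breaker's move $y$ followed by Maker's move $x$. Breaker's move decreases $\Phi$ by $\sum_{A \ni y,\ A \cap B = \emptyset} 2^{-|A \setminus M|}$, while Maker's move $x$ doubles the weight of each unblocked set containing $x$, thereby increasing $\Phi$ by $\sum_{A \ni x,\ A \cap (B\cup\{y\}) = \emptyset} 2^{-|A \setminus M|}$. The greedy choice of $y$ makes Breaker's drop at least the sum corresponding to $x$ with the old Breaker-set $B$; this in turn is at least the analogous sum with the larger Breaker-set $B\cup\{y\}$, which is exactly Maker's gain. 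Hence $\Phi$ does not grow across a round. When Breaker plays first, the initial value $\Phi_0 = \sum_{A \in \mF} 2^{-|A|} < 1/2$ already satisfies the required bound; when Maker plays first, his opening move can at most double $\Phi_0$ (by the same doubling accounting), still leaving $\Phi < 1$, and the invariant propagates from there.

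Finally I would conclude as follows: if Maker were to win by fully occupying some winning set $A^* \in \mF$, then at that moment $A^* \subseteq M$ and $A^* \cap B = \emptyset$, so $w(A^*) = 2^0 = 1$ and $\Phi \geq 1$, contradicting the invariant. The main (and really only) subtle point is the per-round comparison at the greedy step: one must observe that at the instant Breaker picks $y$, the element $x$ Maker is about to pick is still unclaimed, so Breaker's maximum is at least what would be gained at $x$, and then that Maker's gain is actually computed against a \emph{strictly larger} Breaker-set $B \cup \{y\}$, which can only shrink his contribution further. Everything else reduces to careful bookkeeping of the weight function.
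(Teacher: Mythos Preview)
The paper does not supply its own proof of this theorem; it is quoted as a classical result of Erd\H{o}s and Selfridge and used as a black box. Your proposal reproduces exactly the standard potential-function (``weight'') argument from their original paper, and it is correct as written --- including the handling of both move orders and the key observation that Breaker's greedy drop dominates Maker's subsequent gain because Maker's gain is measured against the enlarged set $B\cup\{y\}$.
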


The following simple lemma is useful when a player is trying to ensure expansion of small sets. A similar lemma appeared in \cite{HefEtAl2009}.
\begin{lem}\label{l:quarterdegree}
For every integer $k>0$, if $H$ is a graph on $n$ vertices with minimum degree $\delta(H)\geq 5k$, then $H\in\mM_{\delta_k}$. Moreover, Maker can win the minimum degree $k$ game on the edge set of $H$ in at most $kn$ moves.
\end{lem}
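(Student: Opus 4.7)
The plan is to exhibit an explicit strategy for Maker and verify that it succeeds under the hypothesis $\delta(H) \geq 5k$. The natural candidate is a priority-based greedy strategy. Call a vertex $v$ \emph{active} at time $t$ if Maker's current degree at $v$ satisfies $m_v(t) < k$. On each of her turns, Maker claims any unclaimed edge of $H$ incident to the active vertex $v^\ast$ maximizing Breaker's current degree $b_{v^\ast}(t)$, breaking ties arbitrarily. If no vertex is active, Maker has already achieved her goal and plays arbitrarily for the remainder of the game.

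The move-count bound $kn$ is then immediate: each of Maker's moves is associated with the unique active vertex $v^\ast$ whose Maker-degree she increments by one, and she plays at $v$ only while $m_v < k$, so she plays at most $k$ times at each vertex. The real content of the lemma is the correctness claim, namely that under this strategy no active vertex $v$ ever runs out of free edges---equivalently, at every point in time $m_v + b_v < d_H(v)$ for every active $v$. If this invariant holds, the game terminates with every vertex inactive, so every vertex of Maker's graph has degree at least $k$.

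I would prove the correctness claim via a potential-function / weight-function argument in the spirit of Erd\H{o}s--Selfridge (Theorem~\ref{t:ErdSel}). For each active $v$ define a danger weight $w_v(t) = 2^{b_v(t) - (d_H(v) - k)}$; the threshold $w_v = 2$ corresponds exactly to $b_v = d_H(v) - k + 1$, the point at which $v$ becomes \emph{doomed}, i.e.\ Maker can no longer achieve $m_v = k$. Set $\Phi(t) = \sum_{v\text{ active}} w_v(t)$. Initially $\Phi(0) = \sum_v 2^{k - d_H(v)} \leq n \cdot 2^{-4k}$, comfortably below the danger threshold $2$. Each Breaker move doubles the two weights associated with its endpoints (if active), increasing $\Phi$ by $w_u + w_v$; Maker's priority response at the active vertex of maximum weight is designed to offset this, either by removing the heaviest term from $\Phi$ entirely (when $v^\ast$ becomes inactive upon reaching $m_{v^\ast} = k$) or by using the fact that her single edge simultaneously lowers the weights at both of its endpoints.

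The main technical obstacle is tuning the potential and the tie-breaking rule precisely enough to recover the constant $5$ in $\delta(H) \geq 5k$; a naive version of the above argument typically yields only a looser threshold such as $\delta(H) \geq 6k$ or $\delta(H) \geq ck \log n$. Getting down to $5k$ requires exploiting the dual role of each Maker edge in defending two vertices at once, together with careful control of $\Phi$ across the two-move rounds of the game. Once one verifies that $\Phi$ stays bounded strictly below $2$ at all times, no individual $w_v$ ever reaches the threshold, so no vertex becomes doomed and Maker wins.
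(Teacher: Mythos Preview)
Your potential-function approach has a genuine gap that cannot be patched without changing the method entirely. You assert that $\Phi(0) = \sum_v 2^{k - d_H(v)} \leq n \cdot 2^{-4k}$ is ``comfortably below the danger threshold $2$'', but this is false in the regime the lemma covers: $k$ is an arbitrary positive integer, independent of $n$. For fixed $k$ and large $n$ the initial potential is of order $n$, not of order $1$. More generally, any Erd\H{o}s--Selfridge style weight function that sums one term per vertex will start at $\Theta(n \cdot 2^{-ck})$, so keeping it below a constant forces $\delta(H) = \Omega(\log n)$ rather than $\delta(H) \geq 5k$. Your own parenthetical remark about a ``$ck\log n$'' threshold is exactly this obstruction, and it is not a matter of tuning constants: no choice of exponential weights and no priority rule will eliminate the factor $n$ in the sum. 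The ``dual role'' of a Maker edge you allude to helps by a factor of at most $2$ per round, which is already what the standard Erd\H{o}s--Selfridge bookkeeping uses; it does not buy you an extra factor of $n$.

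The paper avoids this entirely by a \emph{local} decomposition rather than a global potential. One adds a dummy vertex to make all degrees even, takes an Eulerian orientation, and assigns to each vertex $v$ the set $E(v)$ of its out-edges; these sets partition the edge set and each has size at least $\lfloor d_H(v)/2\rfloor \geq \lfloor 5k/2\rfloor$. Maker then plays a pairing strategy \emph{separately} on each $E(v)$: whenever Breaker touches $E(v)$, Maker answers inside $E(v)$. Since $\lfloor |E(v)|/2\rfloor \geq k$, Maker secures $k$ edges at $v$ from $E(v)$ alone, and disjointness of the $E(v)$'s means these local games do not interfere. The move bound $kn$ then falls out immediately. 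The point is that the decomposition turns one game with $n$ targets into $n$ independent games with one target each, so the problematic sum over vertices never appears.
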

\begin{proof}
We define a new graph $H^*$, where $H^* = H$ if all the degrees in $H$ are even, and otherwise $H^*$ is the graph obtained from $H$ by adding a new vertex $v^*$ and connecting it to every vertex of odd degree in $H$. Since all degrees of $H^*$ are even, it admits an Eulerian orientation $\overrightarrow{H}^*$. For every $v\in V(H)$, let $E(v) = \{\{v,u\}\in E(H)\;:\;\overrightarrow{(v,u)}\in E(\overrightarrow{H}^*)\}$. Clearly, $|E(v)|\geq d_{H^*}(v)/2 \geq \lfloor d_H(v)/2\rfloor\geq\lfloor 5k/2\rfloor$ and the sets $\{E(v)\}_{v\in V(H)}$ are pairwise disjoint. In every round, if Breaker claims an edge of $E(v)$, then Maker responds by claiming an edge of $E(v)\setminus \{\{v,v^*\}\}$, unless he already has $k$ edges incident with $v$ in which case Maker proceeds by claiming an edge of $E(u)$, where $u$ is some vertex such that Maker did not yet claim $k$ of its incident edges (if no such vertex exists, then the game was already won by Maker). Note that since $\lfloor|E(v)|/2\rfloor \geq k$, Maker can always play according to this strategy, and is never forced to pick an edge incident with $v^*$. Hence, Maker claims only edges of the original graph $H$. Disregarding the orientation, after at most $kn$ moves, the graph spanned by Maker's edges has minimum degree at least $k$ as claimed.
\end{proof}
\subsection{$(R,c)$-expanders}
Let us first define the type of expanders we wish to study.
\begin{defn}\label{d:rcexpander}
For every $c>0$ and every positive integer $R$ we say that a graph $G=(V,E)$ is an $(R,c)$-\emph{expander} if every subset of vertices $U\subseteq V$ of cardinality $|U|\leq R$ satisfies $|N_G(U)|\geq c\cdot |U|$. We denote the graph property of being an $(R,c)$-expander by $\mX_{R,c}$.
\end{defn}
\begin{rem}\label{r:rcexpander_mono}
From the above definition it clearly follows that for every $c>0$ and every positive integer $R$ (both $c$ and $R$ can be functions of the number of vertices of the graph in question), the graph property $\mX_{R,c}$ is monotone increasing.
\end{rem}

Next, we consider some structural properties of $(R,c)$-expanders. The following two claims show that the removal or addition of subsets that satisfy certain properties result in graphs that are still expanders. These properties will allow us to slightly modify certain expanders without losing their expansion properties.
\begin{clm}\label{c:indsetremovalfromexpander}
If $G=(V,E)$ is an $(R,c)$-expander and $U\subseteq V$ is a subset of vertices such that no two vertices of $U$ have a common neighbor in $G$, then $G[V\setminus U]$ is an $(R,c-1)$-expander.
\end{clm}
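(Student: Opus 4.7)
The plan is to fix an arbitrary $W\subseteq V\setminus U$ with $|W|\leq R$ and bound $|N_{G[V\setminus U]}(W)|$ from below by $(c-1)|W|$. First I would observe the set-theoretic identity $N_{G[V\setminus U]}(W) = N_G(W)\setminus U$: any neighbor of $W$ inside $V\setminus U$ that does not lie in $W$ is simply a $G$-neighbor of $W$ that avoids $U$. Hence
\[
|N_{G[V\setminus U]}(W)| \;\geq\; |N_G(W)| - |N_G(W)\cap U| \;\geq\; c|W| - |N_G(W)\cap U|,
\]
where the last inequality uses that $G$ is an $(R,c)$-expander and $|W|\leq R$.

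The remaining step, which is the only real content, is to show $|N_G(W)\cap U|\leq |W|$. Here is where the hypothesis on $U$ enters: for each $u\in N_G(W)\cap U$, the set $N_G(u)\cap W$ is nonempty, and the assumption that no two vertices of $U$ share a common $G$-neighbor means that the sets $\{N_G(u)\cap W\}_{u\in N_G(W)\cap U}$ are pairwise disjoint nonempty subsets of $W$. This immediately yields $|N_G(W)\cap U|\leq |W|$, and combining with the previous display gives $|N_{G[V\setminus U]}(W)|\geq (c-1)|W|$, as required.

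There is no real obstacle: the proof is a one-paragraph counting argument whose only subtlety is translating the ``no common neighbor'' hypothesis into the disjointness of the neighborhoods $N_G(u)\cap W$. I would just need to be mindful that in my convention $N_G(W)$ excludes $W$ itself, so the identity $N_{G[V\setminus U]}(W) = N_G(W)\setminus U$ is clean and no vertex of $W$ is mistakenly counted as its own neighbor.
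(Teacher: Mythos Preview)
Your proof is correct and essentially identical to the paper's: both reduce to the inequality $|N_G(W)\cap U|\leq |W|$, which the paper derives by noting that each $v\in W$ has $|N_G(v)\cap U|\leq 1$, while you derive it dually by observing that the sets $N_G(u)\cap W$ for $u\in U$ are pairwise disjoint. These are two phrasings of the same counting argument, and your write-up is fine as is.
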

\begin{proof}
Let $S\subseteq V\setminus U$ be a set of cardinality $|S|\leq R$. It follows by our assumption on $U$ that $|N_G(v) \cap U| \leq 1$ holds for every vertex $v \in S$. Hence $|N_{G[V\setminus U]}(S)|\geq |N_G(S)|-|S|\geq (c-1)|S|$.
\end{proof}
\begin{clm}\label{c:indsetaddtoexpander}
Let $G=(V,E)$ be a graph, let $c>0$, and let $R$ be a positive integer. Let $U \subseteq V$ be a subset of vertices such that $d_G(u)\geq (c-1)$ for every $u\in U$, and, moreover, there is no path of length at most $4$ in $G$ whose (possibly identical) endpoints lie in $U$. If $G[V\setminus U]$ is an $(R,c)$-expander, then $G$ is an $(R,c-1)$-expander.
\end{clm}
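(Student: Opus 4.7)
The plan is to fix an arbitrary $S\subseteq V$ with $|S|\leq R$ and decompose it as $S=S_1\cup S_2$, where $S_1=S\setminus U$ and $S_2=S\cap U$. Writing $G':=G[V\setminus U]$, I would assemble a lower bound on $|N_G(S)|$ from two pieces: the set $Y:=N_{G'}(S_1)$, which by the $(R,c)$-expansion of $G'$ (applicable since $|S_1|\leq|S|\leq R$) satisfies $|Y|\geq c|S_1|$; and the set $Z:=N_G(S_2)\setminus S_1$. The hypothesis $d_G(u)\geq c-1$ for every $u\in U$, combined with the fact that $U$ is an independent set and that distinct vertices of $U$ share no common neighbor (both consequences of there being no path of length $\leq 2$ in $G$ between vertices of $U$), implies that the neighborhoods $\{N_G(u):u\in S_2\}$ are pairwise disjoint and contained in $V\setminus U$, so $|N_G(S_2)|\geq(c-1)|S_2|$. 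Both $Y,Z\subseteq V\setminus U\setminus S$, and $N_G(S)\supseteq Y\cup Z$.

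Applying inclusion-exclusion and the bounds above gives
$$|N_G(S)|\;\geq\; |Y|+|Z|-|Y\cap Z|\;\geq\; c|S_1|+(c-1)|S_2|-|S_1\cap N_G(S_2)|-|X|,$$
where $X:=Y\cap N_G(S_2)$ is the set of vertices in $V\setminus U\setminus S_1$ adjacent to both $S_1$ (through an edge of $G'$) and $S_2$. To land at the target bound $(c-1)|S|$ it suffices to prove $|X|\leq |S_1\setminus N_G(S_2)|$, which is equivalent to $|X|+|S_1\cap N_G(S_2)|\leq|S_1|$ and produces the required cancellation.

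This injection inequality is the heart of the argument and is where the full length-$4$ hypothesis on $U$ enters; it is also the main obstacle. For each $v\in X$, the vertex $v$ has a unique neighbor $u(v)\in U$ (a second one would form a length-$2$ path between two vertices of $U$) and at least one neighbor $s_1(v)\in S_1$ (since $v\in N_{G'}(S_1)$). The plan is to show that $v\mapsto s_1(v)$ is an injection from $X$ into $S_1\setminus N_G(S_2)$. For the image, suppose $s_1(v)$ were adjacent to some $u'\in S_2$; then the walk $u(v),v,s_1(v),u'$ has length $3$ with both endpoints in $U$---either a length-$3$ path between two distinct vertices of $U$, or, when $u'=u(v)$, a triangle through $u(v)\in U$---both contradicting the hypothesis. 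For injectivity, two distinct preimages $v_1,v_2\in X$ mapping to a common $s$ would yield the walk $u(v_1),v_1,s,v_2,u(v_2)$ of length $4$ with endpoints in $U$---either a length-$4$ path between distinct $U$-vertices or, when $u(v_1)=u(v_2)$, a $4$-cycle through $u(v_1)\in U$---again contradicting the hypothesis. The delicate point is parsing ``possibly identical endpoints'' as also forbidding short cycles through $U$; this is exactly what rules out the sub-cases $u'=u(v)$ and $u(v_1)=u(v_2)$ in which the injection would otherwise break down. Putting everything together gives $|N_G(S)|\geq c|S_1|+(c-1)|S_2|-|S_1|=(c-1)|S|$, as required.
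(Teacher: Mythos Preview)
Your proof is correct and follows essentially the same approach as the paper's. Both arguments split $S$ into its $U$-part and its $(V\setminus U)$-part, use the $(R,c)$-expansion of $G[V\setminus U]$ on the latter, use the degree condition and disjoint-neighborhood property on the former, and then bound the overlap; in both cases the key inequality (in your notation) is $|X|+|S_1\cap N_G(S_2)|\leq |S_1|$. The only cosmetic difference is how this inequality is justified: you construct an explicit injection $v\mapsto s_1(v)$ from $X$ into $S_1\setminus N_G(S_2)$, whereas the paper observes directly that for each $t\in S_1$ the ``ball'' $\{t\}\cup N_{G'}(t)$ can contain at most one vertex of $N_G(S_2)$ (else a short $U$--$U$ path arises), which immediately gives the bound by summing over $t\in S_1$. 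Your treatment of the ``possibly identical endpoints'' sub-cases (triangle and $4$-cycle through a vertex of $U$) is exactly the right reading of the hypothesis and is needed in both versions of the argument.
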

\begin{proof}
Let $V'=V\setminus U$ and let $H=G[V']$. Let $S\subseteq V$ be of cardinality $s\leq R$, and set $S_1=S\cap U$ and let $S_2=S\setminus S_1$ with respective cardinalities $s_1$ and $s_2=s-s_1$. Our assumption on $U$ imply it is independent, and furthermore, for every $U'\subseteq U$ we have that $|N_G(U')|\geq (c-1)|U'|$. It follows that $N_G(S_1)\subseteq V\setminus U$. Furthermore, $N_G(S_1)$ can contain at most one vertex from each set $\{\{t\}\cup N_H(t)\}_{t\in V'}$, and hence $|N_G(S_1)\cap (S_2\cup N_H(S_2))|\leq |S_2|$. It follows that $N_G(S)\supseteq N_G(S_1)\cup (N_H(S_2)\setminus (N_G(S_1)\cap(S_2\cup N_H(S_2))))$, and that $|N_G(S)|\geq (c-1)s_1 + (c \cdot s_2 - s_2) = (c-1)s_1 + (c-1)(s-s_1)=(c-1)s$ as claimed.
\end{proof}

Next, we describe some sufficient conditions for a graph $G=(V,E)$ to be an expander (with appropriate parameters). Define:
\renewcommand{\labelenumi}{\textbf{(M\arabic{enumi})}}
\begin{enumerate}
\item\label{i:rcexpander_suffcondition_smallsets} $e_G(U)\leq\frac{\delta(G)|U|}{2(c+1)}$ for every subset of vertices $U \subseteq V$ of cardinality $1 \leq |U| < (c+1)r$;
\item\label{i:rcexpander_suffcondition_bigsets} $e_G(U,W)>0$ for every pair of disjoint subsets of vertices $U,W\subseteq V$ of cardinality $|U| = |W| = r$.
\end{enumerate}
\begin{lem}\label{l:rcexpander_suffcondition}
For every $c>0$, if $G=(V,E)$ is a graph which satisfies properties \textbf{M\ref{i:rcexpander_suffcondition_smallsets}} and \textbf{M\ref{i:rcexpander_suffcondition_bigsets}} for some positive integer $r\leq\frac{|V|}{c+2}$, then $G$ is a $(\frac{|V|-r}{c+1},c)$-expander.
\end{lem}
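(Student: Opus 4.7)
The plan is to proceed by contradiction: fix a set $U \subseteq V$ with $|U| \leq R := (|V|-r)/(c+1)$, assume $|N_G(U)| < c|U|$, and derive a contradiction. The argument splits naturally into two regimes according to the size of $U$, with the threshold chosen at $|U|=r$. The hypothesis $r \leq |V|/(c+2)$ guarantees that $r \leq R$, so the two regimes together cover the full range.

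In the small-set case $|U| \leq r$, I would let $S = U \cup N_G(U)$. Under the contrary assumption, $|S| < (c+1)|U| \leq (c+1)r$, so property \textbf{M\ref{i:rcexpander_suffcondition_smallsets}} applies to $S$ and yields
$e_G(S) \leq \delta(G)|S|/(2(c+1)) < \delta(G)|U|/2$. On the other hand, every edge incident to a vertex of $U$ has both endpoints in $S$, so summing degrees over $U$ gives $2e_G(S) \geq \sum_{v \in U} d_G(v) \geq \delta(G)|U|$, i.e.\ $e_G(S) \geq \delta(G)|U|/2$. These two inequalities contradict each other.

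In the large-set case $r \leq |U| \leq R$, I would consider $W := V \setminus (U \cup N_G(U))$. By definition $E_G(U,W) = \emptyset$, and the size of $W$ satisfies
$|W| > |V| - (c+1)|U| \geq |V| - (|V|-r) = r$, where I used the upper bound on $|U|$. Hence I may pick $U' \subseteq U$ and $W' \subseteq W$ with $|U'| = |W'| = r$, and then $e_G(U', W') = 0$, contradicting property \textbf{M\ref{i:rcexpander_suffcondition_bigsets}}.

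The main obstacle is mostly bookkeeping: verifying that the threshold $|U|=r$ allows each case's hypothesis to be invoked cleanly. The density bound \textbf{M\ref{i:rcexpander_suffcondition_smallsets}} is active precisely for sets of size $<(c+1)r$, which is exactly the ceiling reached by $|U \cup N_G(U)|$ when $|U|$ is just below $r$; and the upper bound $|U| \leq (|V|-r)/(c+1)$ in the conclusion of the lemma is exactly what is needed to leave room for an $r$-subset $W'$ in the complement, so that \textbf{M\ref{i:rcexpander_suffcondition_bigsets}} can be applied. The assumption $r \leq |V|/(c+2)$ ensures the two regimes truly overlap at $|U|=r$, leaving no gap in the argument.
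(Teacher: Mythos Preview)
Your proof is correct and essentially identical to the paper's own argument: the same contradiction setup, the same case split at $|U|=r$, the same use of \textbf{M\ref{i:rcexpander_suffcondition_smallsets}} on $U\cup N_G(U)$ via the edge-count lower bound $e_G(U\cup N_G(U))\geq \delta(G)|U|/2$ for small $U$, and the same use of \textbf{M\ref{i:rcexpander_suffcondition_bigsets}} on subsets of $U$ and of $V\setminus(U\cup N_G(U))$ for large $U$. The only cosmetic difference is that you explicitly extract $r$-subsets $U',W'$ in the large-set case, whereas the paper leaves this implicit.
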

\begin{proof}
Set $R = \frac{|V|-r}{c+1}$; note that $R \geq r$ holds by the assumption of the lemma. Assume for the sake of contradiction that there exists a set $S\subseteq V$ of cardinality $|S|\leq R$ for which $|N_G(S)| < c|S|$. Let $T = S \cup N_G(S)$, then $|T| < (c+1)|S|$. If $1\leq|S|\leq r$, then $|T| < (c+1)r$. Moreover, since all edges that have at least one endpoint in $S$ are spanned by the vertices of $T$, it follows that $e_G(T) \geq \frac{\delta(G)|S|}{2}>\frac{\delta(G)|T|}{2(c+1)}$, which contradicts property \textbf{M\ref{i:rcexpander_suffcondition_smallsets}}. If $r<|S|\leq R$, then, since $e_G(S,V\setminus T)=0$ and $|V\setminus T|> |V|-(c+1)|S| \geq |V|-(c+1)R=r$, we obtain a contradiction to property \textbf{M\ref{i:rcexpander_suffcondition_bigsets}}. This concludes the proof of the lemma.
\end{proof}

The reason we study $(R,c)$-expanders is the fact that they entail some pseudo-random properties from which (under some conditions on $R$ and $c$) some of the natural properties that are considered in this paper, namely, admitting a perfect matching, being $k$-vertex-connected. We will provide a sufficient conditions for an $(R,c)$-expander to be $k$-vertex connected and to admit a perfect matching. Hence by playing for an $(R,c)$-expander, Maker will be able to win the two games whose goals are the aforementioned two properties (each posing different conditions on $R$ and $c$). The sufficient condition for a graph to be Hamiltonian, that we will use in the course of the proof, is more delicate than the conditions for $k$-vertex connectivity and for admitting a perfect matching, and requires some additional ideas, but the heart of the proof will still rely on expanders, and the same expander-game.

\section{An expander game on pseudo-random graphs}\label{s:expander_game}
The main object of this section is to describe a general Maker-Breaker game which will reside in the core of all of our proofs. Specifically, the goal of this section is to provide sufficient conditions for $G\in \mM_{\mX_{R,c}}$, or namely, for a graph $G$ to be Maker's win when Maker's goal is to build an $(R,c)$-expander. Although this game may seem at first to be an unnatural and artificial game to study, it turns out that this game will lie in the heart of our proofs of all of the results presented in this paper. Given parameters $c>0$, $0<\varepsilon<1$, $K>0$ and a positive integer $r\leq \frac{|V|}{c+1}$, we define the following two properties of a graph $H=(V,E)$ on $n'$ vertices. These properties, which are closely related to properties \textbf{M\ref{i:rcexpander_suffcondition_smallsets}} and \textbf{M\ref{i:rcexpander_suffcondition_bigsets}}, will be needed in the proof of the main result of this section. Define:
\renewcommand{\labelenumi}{\textbf{(Q\arabic{enumi})}}
\begin{enumerate}
\item\label{i:split_smallsets} $e_H(U)\leq\frac{\varepsilon\delta(H)|U|}{10(c+1)}$ for every subset of vertices $U \subseteq V$ of cardinality $1\leq|U|<(c+1)r$;
\item\label{i:split_bigsets} $e_H(U,W)\geq Kr\ln\left(\frac{n'}{r}\right)$ for every pair of disjoint subsets of vertices $U,W\subseteq V$ of cardinality $|U|=|W|=r$.
\end{enumerate}
\begin{rem}
Whenever we will cite property \textbf{Q\ref{i:split_bigsets}} we will give an explicit expression for $K$ which will not necessarily be a constant.
\end{rem}
\begin{thm}\label{t:expgame}
There exists an integer $n_0>0$ such that for every graph $G'=(V,E)$ on $n'\geq n_0$ vertices with minimum degree $\delta(G')>0$ and for every choice of parameters $\frac{1}{2\delta(G')}<\varepsilon<\frac{1}{2}$, $c>0$, and integer $0 < r \leq \min\{\frac{n'}{c+2},\frac{n'}{e^{30}}\}$ for which $G'$ satisfies properties \textbf{Q\ref{i:split_smallsets}} and \textbf{Q\ref{i:split_bigsets}} with $K=\frac{n'}{r(1-2\varepsilon)}$, Maker can win the $(\frac{n'-r}{c+1},c)$-expander game on $G'$, that is, $G'\in \mM_{\mX_{R,c}}$ with $R=\frac{n'-r}{c+1}$.
\end{thm}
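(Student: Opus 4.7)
The plan is to reduce the expander game to two cleaner sub-tasks for Maker which together will imply the hypotheses of Lemma \ref{l:rcexpander_suffcondition}. Set $k := \lfloor \varepsilon\delta(G')/5 \rfloor$, a positive integer since $\varepsilon\delta(G') > 1/2$. If Maker's final graph $M$ satisfies both (a) $\delta(M) \geq k$ and (b) $M \cap E_{G'}(U,W) \neq \emptyset$ for every disjoint pair $U,W \subseteq V$ with $|U|=|W|=r$, then \textbf{Q\ref{i:split_smallsets}} together with (a) gives \textbf{M\ref{i:rcexpander_suffcondition_smallsets}} via
\[
e_M(U) \leq e_{G'}(U) \leq \frac{\varepsilon\delta(G')\,|U|}{10(c+1)} \leq \frac{\delta(M)\,|U|}{2(c+1)},
\]
and (b) is precisely \textbf{M\ref{i:rcexpander_suffcondition_bigsets}}; Lemma \ref{l:rcexpander_suffcondition} then certifies $M$ as the desired $(R,c)$-expander with $R=(n'-r)/(c+1)$.

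For task (a), Maker follows the scheme of Lemma \ref{l:quarterdegree}: fix the pairwise-disjoint edge pools $\{E(v)\}_{v \in V}$ with $|E(v)| \geq \lfloor\delta(G')/2\rfloor$, and note that it suffices to claim $k$ edges in each $E(v)$. For task (b), let $\mathcal{F} := \{E_{G'}(U,W) : |U|=|W|=r,\ U\cap W = \emptyset\}$; by \textbf{Q\ref{i:split_bigsets}} each $F \in \mathcal{F}$ has $|F| \geq n'\ln(n'/r)/(1-2\varepsilon)$. Maker maintains the Erd\H{o}s-Selfridge danger potential
\[
\Phi := \sum_{F \in \mathcal{F},\; F \cap M = \emptyset} 2^{|F \cap B| - |F|},
\]
whose initial value is at most $\binom{n'}{r}^2 \cdot 2^{-n'\ln(n'/r)/(1-2\varepsilon)} \leq (en'/r)^{2r}\cdot 2^{-n'\ln(n'/r)/(1-2\varepsilon)}$, which the hypothesis $r \leq n'/e^{30}$ renders much smaller than $1/2$. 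Maker's combined strategy is a priority rule: play a pending task-(a) move by default, but if doing so would leave $\Phi > 1/2$, instead play the canonical greedy Erd\H{o}s-Selfridge response --- an unclaimed edge whose inclusion in $M$ yields the largest drop in $\Phi$. Since any Maker move can only decrease $\Phi$ (an edge contained in some $F$ removes its term from the sum; an edge contained in none leaves $\Phi$ unchanged), interleaving (a)-moves does not inflate $\Phi$ beyond what Breaker forces. If $\Phi$ stays $\leq 1/2$ throughout, (b) follows since a fully $B$-claimed $F$ would contribute $1$ to $\Phi$.

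The main obstacle is the scheduling interaction. The greedy E-S response is designed to undo, via the standard ``doubling cancels doubling'' bound, the increase from a single Breaker move, so that if Maker responds greedily after every Breaker move then $\Phi$ is non-increasing; but a task-(a) move does not in general cancel anything, so $\Phi$ can drift upward during stretches of (a)-play. The saving grace is the doubly-exponentially small $\Phi(0)$ afforded by $r \leq n'/e^{30}$ and by the large $K = n'/(r(1-2\varepsilon))$, which grants Maker ample headroom to intersperse (a)-moves, and whenever $\Phi$ approaches $1/2$ the single greedy E-S response pulls it comfortably back. A routine accounting --- Maker's total moves number roughly $|E(G')|/2 \geq \delta(G')n'/4$, while task (a) demands at most $kn' \leq \varepsilon\delta(G')n'/5$ --- confirms that both objectives fit within Maker's move budget, and monotonicity of $\mX_{R,c}$ (Remark \ref{r:rcexpander_mono}) ensures that once the expansion condition is achieved it cannot be undone by subsequent play.
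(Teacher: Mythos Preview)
Your high-level plan---reduce to \textbf{M\ref{i:rcexpander_suffcondition_smallsets}} and \textbf{M\ref{i:rcexpander_suffcondition_bigsets}}, secure the first via a minimum-degree game (Lemma~\ref{l:quarterdegree}) and the second via Erd\H{o}s--Selfridge on the dual hypergraph, then invoke Lemma~\ref{l:rcexpander_suffcondition}---is exactly the paper's. The gap is in the scheduling. The Erd\H{o}s--Selfridge greedy response guarantees only that $\Phi$ is \emph{non-increasing over a full round}; it does not pull $\Phi$ strictly below its value at the start of that round. Hence every round in which you play a task-(a) move instead of the greedy response can leave $\Phi$ as large as twice its pre-round value (Breaker's move may double the relevant terms, and your constrained (a)-edge need not cancel anything), while subsequent greedy rounds recover none of that loss. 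After $t$ task-(a) moves you therefore only know $\Phi \le 2^{t}\Phi_0$, so your priority rule can permit at most $t \lesssim \log_2(1/\Phi_0) \approx \tfrac{\ln 2}{1-2\varepsilon}\, n'\ln(n'/r)$ such moves before forcing you into greedy mode permanently. But task~(a) demands roughly $kn' \approx \varepsilon\delta(G')n'/5$ moves, and nothing in the hypotheses bounds $\delta(G')$ by $\ln(n'/r)$; in the paper's own application (Lemma~\ref{l:splitGNM_noSMALL}) one has $\delta(G') \ge \ln^{0.9}n$, $\varepsilon = \ln^{-0.03}n$, and $\ln(n'/r) \asymp \ln\ln n$, so $kn' \asymp n\ln^{0.87}n$ vastly exceeds the available headroom $\asymp n\ln\ln n$. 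Your claim that ``the single greedy E--S response pulls it comfortably back'' is simply not what the potential method delivers.

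The paper resolves this not by budgeting the interference but by eliminating it. Before play begins, a random $2\varepsilon$-coloring partitions $E(G')$ into $E_1$ with $\delta\bigl((V,E_1)\bigr) \ge \varepsilon\delta(G')$ and $E_2$ still satisfying \textbf{Q\ref{i:split_bigsets}} with the smaller constant $K=3$ (Lemma~\ref{l:split}). Maker then runs two \emph{disjoint} games: the minimum-degree game on $E_1$ and the Erd\H{o}s--Selfridge game on $E_2$, always replying in whichever board Breaker just played (Lemma~\ref{l:expanderGame}). Because Breaker's $E_1$-moves never touch the potential on $E_2$, the analysis there is the unamended Erd\H{o}s--Selfridge bound (Lemma~\ref{l:large_sets_game}), with no scheduling interaction to control. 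This board-splitting step is precisely the idea your proposal lacks.
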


Our proof of this theorem will be presented as a series of three lemmata whose composition implies Theorem \ref{t:expgame} directly.
\begin{lem}\label{l:split}
There exists an integer $n_0>0$ such that for every graph $G'=(V,E)$ on $n'\geq n_0$ vertices with minimum degree $\delta(G')>0$ and for every choice of parameters $\frac{1}{2\delta(G')}<\varepsilon<\frac{1}{2}$ and integer $0 < r \leq n'/e^4$ for which $G'$ satisfies property \textbf{Q\ref{i:split_bigsets}} with $K=\frac{n'}{r(1-2\varepsilon)}$, the edge set $E$ can be split into two disjoint subsets $E=E_1\cup E_2$ such that the graph $G_1=(V,E_1)$ has minimum degree $\delta(G_1)\geq\varepsilon\delta(G')$ and the graph $G_2=(V,E_2)$ satisfies property \textbf{Q\ref{i:split_bigsets}} with $K=3$.
\end{lem}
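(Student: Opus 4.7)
The plan is to produce the partition $E=E_{1}\sqcup E_{2}$ by a random coloring: place each edge of $G'$ independently in $E_{1}$ with probability $p=2\varepsilon$ and otherwise in $E_{2}$. The choice $p=2\varepsilon$ is convenient on two counts. First, $p/\varepsilon=2$ provides the multiplicative slack needed in the Chernoff lower-tail bound for property (a). Second, $1-p=1-2\varepsilon$ cancels the $1/(1-2\varepsilon)$ factor appearing in the hypothesis \textbf{Q\ref{i:split_bigsets}} with $K=n'/(r(1-2\varepsilon))$, so the expected value of $e_{G_{2}}(U,W)$ is at least $n'\ln(n'/r)$ for every disjoint pair $(U,W)$ of size $r$.

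For the big-sets condition I would apply Chernoff's lower-tail bound: $e_{G_{2}}(U,W)\sim\Bin(e_{G'}(U,W),1-p)$ has mean at least $n'\ln(n'/r)$, while the target $3r\ln(n'/r)$ is at most $3e^{-4}$ times the mean (since $r\leq n'/e^{4}$). The resulting failure probability is $\exp(-\Omega(n'\ln(n'/r)))$, which comfortably beats the $\binom{n'}{r}^{2}\leq (en'/r)^{2r}$ union bound because $n'\ln(n'/r)\gg 2r\ln(en'/r)$ under the hypothesis $n'\geq e^{4}r$.

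For the minimum-degree condition, $d_{G_{1}}(v)\sim\Bin(d_{G'}(v),2\varepsilon)$ has mean at least $2\varepsilon\delta(G')$, and Chernoff yields $\Prob{d_{G_{1}}(v)<\varepsilon\delta(G')}\leq\exp(-\varepsilon\delta(G')/4)$. A union bound over the $n'$ vertices succeeds whenever $\varepsilon\delta(G')\gg\ln n'$.

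The main obstacle is the low-degree regime $\varepsilon\delta(G')=O(\ln n')$, in which the naive Chernoff bound is too weak for (a). There I would follow the random partition with a short deterministic patching step: for every ``bad'' vertex $v$ with $d_{G_{1}}(v)<\lceil\varepsilon\delta(G')\rceil$, move just enough incident edges from $E_{2}$ to $E_{1}$ to meet the quota. This transfers at most $n'\lceil\varepsilon\delta(G')\rceil$ edges in total and at most $2r\lceil\varepsilon\delta(G')\rceil=O(r\ln n')$ edges between any fixed disjoint pair $(U,W)$, which is negligible compared with the $n'\ln(n'/r)$-sized mean preserved in the big-sets bound. Combining the two steps and taking a union bound yields a realization in which both properties hold simultaneously. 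The delicate point throughout is the two-sided tuning of $p$: raising $p$ helps (a) but eats into the slack for (b), so one must verify that $p=2\varepsilon$ (together with the patching) handles both uniformly in $\varepsilon\in(1/(2\delta(G')),1/2)$.
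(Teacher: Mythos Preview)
Your random partition with $p=2\varepsilon$ and your Chernoff treatment of property \textbf{Q\ref{i:split_bigsets}} match the paper exactly. The difference is entirely in how you secure $\delta(G_1)\geq\varepsilon\delta(G')$, and your patching step has a genuine gap.

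Take $r$ near the top of its range, say $r=n'/e^{4}$, so that $\ln(n'/r)=4$ and the mean of $e_{G_2}(U,W)$ is only about $4n'$. Now take $\varepsilon\delta(G')$ of order $\ln n'$ (this is allowed and is precisely the ``low-degree regime'' you flag). Your worst-case bound on edges removed from a fixed pair $(U,W)$ by the patching is $2r\lceil\varepsilon\delta(G')\rceil=\Theta(r\ln n')=\Theta\!\big((n'/e^{4})\ln n'\big)$, which for large $n'$ dwarfs the $4n'$ you have to work with; the patching can annihilate $e_{G_2}(U,W)$ entirely. Your parenthetical ``$=O(r\ln n')\ldots$ negligible compared with $n'\ln(n'/r)$'' is exactly where the argument fails: $r\ln n'$ is \emph{not} negligible compared to $n'\ln(n'/r)$ across the full range $r\leq n'/e^{4}$. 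Refining to ``only actually bad vertices get patched'' does not obviously save you either, since the indicators $\{d_{G_1}(v)<\varepsilon\delta(G')\}$ are positively correlated and you would need a uniform bound over all $\binom{n'}{r}^{2}$ pairs.

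The paper sidesteps this entirely with a one-line trick: for each vertex $v$, the median of $d_{G_1}(v)\sim\Bin(d_{G'}(v),2\varepsilon)$ is at least $\lfloor 2\varepsilon\delta(G')\rfloor>\varepsilon\delta(G')$ (this is exactly where the hypothesis $\varepsilon>1/(2\delta(G'))$ is used), so $\Pr[d_{G_1}(v)\geq\varepsilon\delta(G')]>1/2$. Since these are increasing events in the edge indicators, FKG gives $\Pr[\delta(G_1)\geq\varepsilon\delta(G')]>2^{-n'}$. The Chernoff/union-bound calculation you already did shows the failure probability for \textbf{Q\ref{i:split_bigsets}} with $K=3$ is strictly below $2^{-n'}$, so a good partition exists. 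No patching, no case split on $\varepsilon\delta(G')$, and the argument is uniform in $r$.
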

\begin{proof}
Pick every edge of $G'$ to be an edge in $G_1$ with probability $2\varepsilon$ independently of all other choices. The degree in $G_1$ of every vertex $v \in V$ is binomially distributed, that is, $d_{G_1}(v)\sim\Bin(d_{G'}(v),2\varepsilon)$ and thus its median is at least $\lfloor2\varepsilon\delta(G')\rfloor$. By our choice of $\varepsilon$ we have that $\lfloor2\varepsilon\delta(G')\rfloor > \varepsilon\delta(G')$ and therefore $\Prob{d_{G_1}(v)\geq\varepsilon\delta(G')}>1/2$. Since the degrees of every two vertices are positively correlated, we have that
$$\Prob{\delta(G_1)\geq\varepsilon\delta(G')}>2^{-n'}.$$
Let $U,W$ be a pair of disjoint subsets of vertices of cardinality $|U|=|W|=r$. By our assumption on $G'$ we have that $e_{G'}(U,W)\geq \frac{n'\ln\left(\frac{n'}{r}\right)}{1-2\varepsilon}$. As $e_{G_2}(U,W)\sim\Bin(e_{G'}(U,W),1-2\varepsilon)$ we have $\Exp{e_{G_2}(U,W)}\geq n'\ln\left(\frac{n'}{r}\right)$. Applying Theorem \ref{t:Chernoff} we have
$$\Prob{e_{G_2}(U,W)<3r\ln\left(\frac{n'}{r}\right)}\leq\exp\left(-\frac{\left(1-\frac{3r}{n'}\right)^2n'\ln\left(\frac{n'}{r}\right)}{2}\right)\leq\exp\left(-\frac{n'\ln\left(\frac{n'}{r}\right)}{3}\right).$$
By applying the union bound over all pairs of disjoint subsets of vertices of cardinality $r$ each, we conclude that the probability that $G_2$ violates property \textbf{Q\ref{i:split_bigsets}} with $K=3$ is at most
\begin{eqnarray*}
\binom{n'}{r}\binom{n'-r}{r}\exp\left(-\frac{n'\ln\left(\frac{n'}{r}\right)}{3}\right)&\leq&\left(\frac{en'}{r}\right)^{2r}\cdot\exp\left(-\frac{n'\ln\left(\frac{n'}{r}\right)}{3}\right)\\
&=&\exp\left(2r\left(1+\ln\left(\frac{n'}{r}\right)\right)-\frac{n'\ln\left(\frac{n'}{r}\right)}{3}\right)\\
&\leq&\exp\left(-\frac{n'\ln\left(\frac{n'}{r}\right)}{4}\right)\\
&<& 2^{-n'},
\end{eqnarray*}
and therefore there exists a partition of $G'$ as claimed.
\end{proof}

The following lemma provides a sufficient condition on a graph $G=(V,E)$ for it to be a Maker's win in the game $(E, \mathcal{F}_{M\ref{i:rcexpander_suffcondition_bigsets}})$, that is, the game on $G$ in which Maker's goal is to build a subgraph which satisfies the (monotone increasing) property \textbf{M\ref{i:rcexpander_suffcondition_bigsets}}. In order to prove this result, we invoke a rather standard technique of studying a dual game in which the roles of Maker and Breaker are exchanged. Note that in the dual game, Breaker (which was the original Maker) is the second player.
\begin{lem}\label{l:large_sets_game}
There exists an integer $n_0>0$ such that for every graph $G_2=(V,E_2)$ on $n'\geq n_0$ vertices and for every integer $0 < r\leq n'/e^{30}$ for which $G_2$ satisfies property \textbf{Q\ref{i:split_bigsets}} with $K=3$, playing on $E_2$ Maker can build a subgraph of $G_2$ which satisfies property \textbf{M\ref{i:rcexpander_suffcondition_bigsets}}.
\end{lem}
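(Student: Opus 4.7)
The natural approach is to pass to the dual Maker-Breaker game and apply the Erdős-Selfridge theorem (Theorem~\ref{t:ErdSel}). Define the hypergraph
$$\mathcal{H} = \{ E_{G_2}(U, W) : U, W \subseteq V \text{ disjoint},\ |U|=|W|=r \}.$$
Maker's final subgraph satisfies \textbf{M\ref{i:rcexpander_suffcondition_bigsets}} if and only if Maker's edge set hits every $H \in \mathcal{H}$, which fails precisely when Breaker claims every edge of some $H \in \mathcal{H}$. In other words, original Maker wins our game if and only if, in the Maker-Breaker game $(E_2, \mathcal{H})$ with the two roles swapped, original Maker (now playing as Breaker, still moving second) can prevent the new Maker from occupying any winning set $H \in \mathcal{H}$.

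By Theorem~\ref{t:ErdSel}, it suffices to check that
$$\sum_{H \in \mathcal{H}} 2^{-|H|} < \tfrac{1}{2}.$$
Property \textbf{Q\ref{i:split_bigsets}} with $K = 3$ gives $|H| \geq 3r \ln(n'/r)$ for every $H \in \mathcal{H}$, while $|\mathcal{H}| \leq \binom{n'}{r}\binom{n'-r}{r} \leq (en'/r)^{2r}$. Combining,
$$\sum_{H \in \mathcal{H}} 2^{-|H|} \leq \left(\frac{en'}{r}\right)^{2r} 2^{-3r \ln(n'/r)} = \exp\bigl(\, r\,[\,2 + (2 - 3\ln 2)\,\ln(n'/r)\,]\,\bigr).$$

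The only non-routine step is verifying that this exponent is sufficiently negative. The hypothesis $r \leq n'/e^{30}$ gives $\ln(n'/r) \geq 30$, and since $2 - 3\ln 2 < -0.07$, we have $2 + (2-3\ln 2)\ln(n'/r) \leq 2 + 30(2 - 3\ln 2) < -1/3$; hence the exponent is at most $-r/3$. For $r \geq 3$ this is strictly below $-\ln 2$, as required. The leftover cases $r \in \{1, 2\}$ are vacuous, because \textbf{Q\ref{i:split_bigsets}} requires $3r \ln(n'/r) \leq |E_{G_2}(U,W)| \leq r^2$ in a simple graph, so its validity actually forces $r \geq 3\ln(n'/r) \geq 90$ in our regime. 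I expect the combinatorial content of the lemma to be entirely routine once the dual-game reformulation is in place; the tuning of the constant $e^{30}$ in the hypothesis is the only delicate point, chosen precisely so that the exponent $(2-3\ln 2)\ln(n'/r)$ dominates the union-bound factor $2\ln(en'/r)$.
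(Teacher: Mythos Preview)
Your argument is correct and is essentially identical to the paper's own proof: both pass to the dual game $(E_2,\mathcal{L})$ with $\mathcal{L}$ the family of edge sets $E_{G_2}(U,W)$ over disjoint $r$-sets, and both verify the Erd\H{o}s--Selfridge criterion via the same estimate $\sum_{L\in\mathcal{L}}2^{-|L|}\le (en'/r)^{2r}\exp(-3r\ln 2\cdot\ln(n'/r))$. The only difference is cosmetic: the paper simply asserts the final bound is $<\tfrac12$, whereas you spell out how the hypothesis $r\le n'/e^{30}$ forces the exponent to be sufficiently negative and separately dispose of the degenerate cases $r\in\{1,2\}$ via the observation that \textbf{Q\ref{i:split_bigsets}} cannot hold for such~$r$.
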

\begin{proof}
Let $G_2$ be any graph with vertex set $V$. In order for Maker to build a graph which satisfies property \textbf{M\ref{i:rcexpander_suffcondition_bigsets}}, he can adopt the role of Breaker in the game $(E_2,\mL)$, where $\mL$ is the family of edge-sets of all induced bipartite subgraphs of $G_2$ with both parts of size $r$. Recall that, by property \textbf{Q\ref{i:split_bigsets}} with $K=3$, every such winning set $L\in \mL$ spans at least $3r\ln\left(\frac{n'}{r}\right)$ edges. It follows that
\begin{eqnarray*}
\sum_{L \in \mL}2^{-|L|}&\leq&\sum_{\substack{U\subseteq V\\|U|=r}}\sum_{\substack{W\subseteq V\setminus U\\|W|=r}} 2^{- e_{G_2}(U,W)}\\
&\leq&\binom{n'}{r}\binom{n'-r}{r}\cdot\exp\left(-3r\ln\left(\frac{n'}{r}\right)\ln 2\right)\\
&\leq& \left(\frac{e n'}{r}\right)^{2r} \cdot \exp\left(-3r\ln\left(\frac{n'}{r}\right)\ln 2\right)\\
&\leq&\exp\left(r\cdot\left(2\ln \left(\frac{n'}{r}\right)+2-\ln2\cdot3\ln\left(\frac{n'}{r}\right)\right)\right)\\
&<&\frac{1}{2}.
\end{eqnarray*}
The assertion of the lemma follows readily by Theorem~\ref{t:ErdSel}.
\end{proof}

\begin{lem} \label{l:expanderGame}
There exists an integer $n_0>0$ such that for every graph $G'=(V,E)$ on $n'\geq n_0$ vertices and for every choice of parameters $0<\varepsilon<1$, $c>0$ and integer $0<r\leq \frac{n'}{c+2}$ for which $G'$ satisfies property \textbf{Q\ref{i:split_smallsets}} and whose edge set can be partitioned into two disjoint sets $E=E_1\cup E_2$ where $G_1=(V,E_1)$ is of minimum degree $\delta(G_1)\geq\varepsilon\cdot\delta(G')$, and $G_2=(V,E_2)$ satisfies \textbf{Q\ref{i:split_bigsets}} with $K=3$, Maker can win the $(\frac{n'-r}{c+1}, c)$-expander game, that is, $G'\in \mM_{\mX_{R,c}}$ with $R=\frac{n'-r}{c+1}$.
\end{lem}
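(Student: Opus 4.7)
The plan is to have Maker pursue two independent sub-goals in parallel, one on each half of the edge-partition. Using the $E_1$-edges, Maker will guarantee a large minimum degree via Lemma~\ref{l:quarterdegree}; using the $E_2$-edges, he will guarantee property \textbf{M\ref{i:rcexpander_suffcondition_bigsets}} via Lemma~\ref{l:large_sets_game}. Together with property \textbf{Q\ref{i:split_smallsets}} applied to $G'$, these two guarantees will let us invoke the expander criterion of Lemma~\ref{l:rcexpander_suffcondition} on Maker's graph.

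Concretely, Maker maintains two virtual sub-games, one on $E_1$ and one on $E_2$. Whenever Breaker claims an edge $e\in E_i$, Maker treats this as a Breaker move in the $i$-th virtual game and responds inside $E_i$ according to his strategy $\sigma_i$ there. Since $E_1\cap E_2=\emptyset$ and both $\sigma_1$ and $\sigma_2$ are purely reactive (each supplies a free edge of its own sub-board in answer to Breaker's latest move on that sub-board), these two sub-strategies interleave without conflict. For $\sigma_1$, set $k:=\lfloor\varepsilon\delta(G')/5\rfloor$; the hypothesis $\delta(G_1)\geq\varepsilon\delta(G')\geq 5k$ is exactly what Lemma~\ref{l:quarterdegree} requires, so Maker builds a subgraph $M_1\subseteq G_1$ of minimum degree at least $k$. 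For $\sigma_2$, Lemma~\ref{l:large_sets_game} produces a Maker strategy on $G_2$ (obtained through the dual-Breaker argument, which via Theorem~\ref{t:ErdSel} explicitly covers the second-player case) that builds a subgraph $M_2\subseteq G_2$ satisfying property \textbf{M\ref{i:rcexpander_suffcondition_bigsets}} for the prescribed $r$.

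Write $M=M_1\cup M_2\subseteq G'$ for Maker's final graph. Property \textbf{M\ref{i:rcexpander_suffcondition_bigsets}} is inherited directly from $M_2$. For property \textbf{M\ref{i:rcexpander_suffcondition_smallsets}}, the inclusion $M\subseteq G'$ combined with property \textbf{Q\ref{i:split_smallsets}} of $G'$ yields
\[
e_M(U)\leq e_{G'}(U)\leq\frac{\varepsilon\delta(G')\,|U|}{10(c+1)}
\]
for every $U\subseteq V$ with $1\leq |U|<(c+1)r$, while $\delta(M)\geq\delta(M_1)\geq k$. The factor of $5$ separating the constants $\tfrac{1}{10}$ in \textbf{Q\ref{i:split_smallsets}} and $\tfrac{1}{2}$ in \textbf{M\ref{i:rcexpander_suffcondition_smallsets}} is precisely what matches Lemma~\ref{l:quarterdegree}'s loss of a factor of $5$, so after absorbing the harmless rounding in the floor we obtain $e_M(U)\leq\frac{\delta(M)\,|U|}{2(c+1)}$. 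Applying Lemma~\ref{l:rcexpander_suffcondition} to $M$ now certifies that $M$ is a $\bigl(\tfrac{n'-r}{c+1},c\bigr)$-expander, which is exactly the statement $G'\in\mM_{\mX_{R,c}}$ for $R=(n'-r)/(c+1)$.

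The substantive content really lies in Lemmas~\ref{l:quarterdegree} and~\ref{l:large_sets_game}; the work here is the combinatorial bookkeeping for simultaneous play and the verification that the constants have been tuned so that Q1 implies M1 via the minimum degree achieved by $\sigma_1$. The only place where one must be slightly careful is the $\lfloor\cdot\rfloor$ in the definition of $k$, but since $n'$ may be assumed large and $\delta(G')>0$, the factor-of-$5$ gap between the two constants leaves enough room to swallow the $\leq 1$ rounding defect.
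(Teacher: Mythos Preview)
Your proof is correct and follows essentially the same approach as the paper's: split the board as $E_1\cup E_2$, play the minimum-degree game of Lemma~\ref{l:quarterdegree} on $E_1$ and the bipartite-edge game of Lemma~\ref{l:large_sets_game} on $E_2$ in parallel, then combine via Lemma~\ref{l:rcexpander_suffcondition}. The paper phrases the minimum-degree bound as $\delta(H_1)\geq\delta(G_1)/5$ rather than introducing $k=\lfloor\varepsilon\delta(G')/5\rfloor$, and is slightly more explicit about the parity edge case when one sub-board is exhausted, but the argument is otherwise identical.
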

\begin{proof}
Before the game starts, Maker splits the board into two parts, $G_1=(V,E_1)$ and $G_2=(V,E_2)$ as indicated in the lemma. Maker then plays two separate games in parallel, one on $E_1$ and the other on $E_2$. In every turn in which Breaker claims some edge of $E_i$, for $i=1,2$, Maker responds by claiming an edge of $E_i$ as well (except for maybe once if Breaker has claimed the last edge of $E_i$). Let $H$ denote the graph built by Maker by the end of the game and set $H_1=(V, E(H)\cap E_1)$ and $H_2 =(V,E(H)\cap E_2)$.

The game on $E_1$ is played according to Lemma \ref{l:quarterdegree}. Hence, at the end of the game, Maker's graph $H_1$ will have minimum degree at least $\delta(H_1)\geq \frac{\delta(G_1)}{5}$. Since $G'$ satisfies property \textbf{Q\ref{i:split_smallsets}} and $\delta(G_1)\geq\varepsilon\delta(G')$ it follows that, for every $U\subseteq V$ of cardinality $1\leq|U|<(c+1)r$, the number of Maker's edges with both endpoints in $U$ is $e_H(U)\leq e_{G'}(U)\leq\frac{\varepsilon\delta(G')|U|}{10(c+1)}\leq\frac{\delta(G_1)|U|}{10(c+1)}\leq\frac{\delta(H_1)|U|}{2(c+1)}\leq\frac{\delta(H)|U|}{2(c+1)}$. Hence, $H$ satisfies property \textbf{M\ref{i:rcexpander_suffcondition_smallsets}}.

The game on $E_2$ is played according to Lemma \ref{l:large_sets_game}, and therefore at the end of the game, Maker will build a graph $H_2$ which satisfies property \textbf{M\ref{i:rcexpander_suffcondition_bigsets}}. By the monotonicity of \textbf{M\ref{i:rcexpander_suffcondition_bigsets}}, this property also holds for $H$. Noting that $H$, $n'$, $r$ and $c$ satisfy the conditions of Lemma \ref{l:rcexpander_suffcondition}, we deduce that $H\in\mM_{\mX_{R,c}}$, that is, Maker's graph is an $(R,c)$-expander as claimed.
\end{proof}

\section{Properties of random graphs and random graph processes}
\label{s:Random}
We start with a very simple claim regarding the number of edges in the Binomial random graph model $\GNP$.
\begin{clm}\label{c:MaxEdgesGNP}
If $p\geq\frac{\ln n}{n}$ and $G \sim \GNP$, then w.h.p. $e(G)\leq n^2p$.
\end{clm}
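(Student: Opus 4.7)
The plan is to observe that $e(G)$ is a sum of independent Bernoulli indicators, one for each of the $N = \binom{n}{2}$ potential edges, so $e(G) \sim \Bin(N,p)$ with expectation $Np = \binom{n}{2}p$. Since $n^{2}p \geq 2 \binom{n}{2}p = 2\,\Exp{e(G)}$, the event $\{e(G) > n^{2}p\}$ is contained in the event that $e(G)$ exceeds twice its expectation, so it suffices to apply the upper-tail Chernoff bound from Theorem \ref{t:Chernoff}(2) with $\varepsilon = 1$.

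Concretely, I would write
\[
\Prob{e(G) > n^{2}p} \leq \Prob{e(G) > 2\binom{n}{2}p} < \exp\!\left(-\binom{n}{2}p/3\right),
\]
and then use the hypothesis $p \geq \ln n / n$ to lower-bound $\binom{n}{2}p \geq (n-1)\ln n / 2$, which tends to infinity with $n$. Consequently the right-hand side is $o(1)$, giving the claim.

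There is really no obstacle here; the only thing to double-check is the inequality $n^{2}p \geq 2\binom{n}{2}p$, which is immediate from $n^{2} \geq n(n-1) = 2\binom{n}{2}$, and the fact that $p \geq \ln n / n$ is used only to force $\binom{n}{2}p \to \infty$ so that the Chernoff estimate is $o(1)$. The proof is a one-line application of Chernoff and will comfortably fit within a few lines of LaTeX.
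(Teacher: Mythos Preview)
Your proposal is correct and matches the paper's own proof essentially line for line: both observe that $e(G)\sim\Bin\!\left(\binom{n}{2},p\right)$, use $n^{2}p\geq 2\binom{n}{2}p$ to pass to the event $e(G)>2\,\Exp{e(G)}$, apply Theorem~\ref{t:Chernoff}(2) with $\varepsilon=1$, and then invoke $p\geq \ln n/n$ to conclude the bound $\exp\!\left(-\binom{n}{2}p/3\right)\leq\exp\!\left(-(n-1)\ln n/6\right)=o(1)$.
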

\begin{proof}
This is a simple application of Theorem \ref{t:Chernoff}. Clearly, $e(G)\sim\Bin(\binom{n}{2},p)$, entailing $\Prob{e(G)>n^2p}<\Prob{e(G)\geq2\binom{n}{2}\cdot p}\leq\exp(-\frac{(n-1)\ln n}{6})=o(1).$
\end{proof}
Next, we consider the random graph model we are interested in, the random graph process. For every fixed integer $k\geq 1$ we define two functions as follows:
\begin{eqnarray}
m_k&=&\binom{n}{2}\frac{\ln n + (k-1)\ln\ln n - \ln\ln\ln n}{n};\\
M_k&=&\binom{n}{2}\frac{\ln n + (k-1)\ln\ln n + \ln\ln\ln n}{n}.
\end{eqnarray}
The following lemma (see e.g. \cite{Bol2001}) describes a fairly precise behavior of the minimum degree of the random graph process.
\begin{lem}\label{l:hittime_mindeg}
For every fixed integer $k\geq 1$, if $\tG$ is the random graph process, then w.h.p.
$$m_k<\tau(\tG;\delta_k)<M_k.$$
\end{lem}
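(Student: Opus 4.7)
The plan is to work in the binomial model $\GNP$, compute the probability $\Prob{\delta(\GNP) \geq k}$ at two appropriate values of $p$, and then transfer the conclusions to the random graph process via Proposition~\ref{p:MonoGNPtoGNM}. Since $\delta_k$ is monotone increasing and $G_M \sim \GNM$ along the process, a w.h.p.\ statement about $\GNP$ at $p = m_k/\binom{n}{2}$ (respectively $p = M_k/\binom{n}{2}$) translates into a w.h.p.\ statement about $G_{m_k}$ (respectively $G_{M_k}$), and together these give the two-sided bound on the hitting time (recalling that $t \mapsto \mathbf{1}[G_t \in \delta_k]$ is monotone nondecreasing in $t$).

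For the upper bound $\tau(\tG;\delta_k) < M_k$, I would introduce the random variables $X_j = |\{v \in [n] : d_{\GNP}(v) = j\}|$ for $0 \leq j \leq k-1$ and use
$$\Exp{X_j} = n \binom{n-1}{j} p^j (1-p)^{n-1-j}.$$
A careful asymptotic evaluation at $p = M_k/\binom{n}{2}$, using $np = \ln n + (k-1)\ln\ln n + \ln\ln\ln n + o(1)$, yields $\Exp{X_{k-1}} = \Theta(1/\ln\ln n) \to 0$, and the lower-degree contributions $\Exp{X_j}$ for $j < k-1$ are asymptotically even smaller. Markov's inequality then gives $\Prob{\delta(\GNP) < k} \to 0$, and Proposition~\ref{p:MonoGNPtoGNM} lifts this to $\Prob{G_{M_k} \in \delta_k} \to 1$, i.e.\ $\tau(\tG;\delta_k) < M_k$ w.h.p.

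For the lower bound, I would set $p = m_k/\binom{n}{2}$, so that $np = \ln n + (k-1)\ln\ln n - \ln\ln\ln n + o(1)$, and compute $\Exp{X_{k-1}} = \Theta(\ln\ln n) \to \infty$. The existence of a vertex of degree $k-1$ then follows from the second moment method: decomposing $\Exp{X_{k-1}^2}$ into diagonal and off-diagonal contributions, and observing that for distinct vertices $u \neq v$ the joint probability $\Prob{d(u) = d(v) = k-1}$ differs from $\Prob{d(u) = k-1}\,\Prob{d(v) = k-1}$ only by lower-order correction factors (arising from the single potentially shared edge $uv$), one obtains $\mathrm{Var}(X_{k-1}) = o(\Exp{X_{k-1}}^2)$. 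Chebyshev's inequality then yields $\Prob{X_{k-1} = 0} \to 0$, hence $\Prob{\GNP \in \delta_k} \to 0$, and Proposition~\ref{p:MonoGNPtoGNM} once again transfers this to $\Prob{G_{m_k} \in \delta_k} \to 0$, i.e.\ $\tau(\tG;\delta_k) > m_k$ w.h.p.

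The main technical obstacle is the second moment estimate for $X_{k-1}$: one has to track the Poisson-type asymptotics of $(1-p)^{2n - O(1)}$ and handle the correction stemming from the potentially shared edge between pairs of low-degree vertices with enough precision that $\Exp{X_{k-1}^2}/\Exp{X_{k-1}}^2 \to 1$. Everything else, namely Markov's inequality, Chebyshev's inequality, and the $\GNM \leftrightarrow \GNP$ transfer, is routine. Since this is a classical random graph calculation of exactly the kind treated in Bollob\'as's monograph~\cite{Bol2001}, no genuinely new idea is required and the author can reasonably defer the argument to a textbook reference.
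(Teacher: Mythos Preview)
Your proposal is correct and in fact more detailed than the paper itself: the paper does not prove this lemma at all but simply cites it as a standard result from Bollob\'as's monograph~\cite{Bol2001}. Your first/second moment argument in $\GNP$ followed by transfer via Proposition~\ref{p:MonoGNPtoGNM} is exactly the classical route to this statement, and your closing remark that the author can reasonably defer to a textbook reference is precisely what the paper does.
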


Let $G=(V,E)$ be a graph on $n$ vertices and, for a positive integer $t$, let
\begin{equation}
\mD_t=\mD_t(G)=\{v\in V\;:\; d_G(v)<t\}.
\end{equation}
\begin{rem}\label{r:Small-monotone}
Let $\tG=\{G_i\}_{i=0}^{\binom{n}{2}}$ be the random graph process, then $\mD_t(G_{i-1})\supseteq \mD_t(G_i)$ holds for every $1\leq i\leq \binom{n}{2}$.
\end{rem}
The following estimate on the probability of a vertex to be in $\mD_t(\GNP)$ will be of use later on.
\begin{clm}\label{c:prob_v_in_SMALL}
For every integer $t\leq\ln^{0.9} n$ and for every vertex $v$, if $\frac{\ln n}{n}<p<\frac{2\ln n}{n}$, then
$$\Prob{v\in\mD_t(\GNP)}\leq n^{-1+o(1)}.$$
\end{clm}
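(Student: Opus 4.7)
The plan is to bound the lower tail of a binomial random variable directly. Since $d_{\GNP}(v) \sim \Bin(n-1,p)$, the quantity to estimate is $\Prob{\Bin(n-1,p) < t}$. The regime is favorable: $t \leq \ln^{0.9} n$ lies well below the mean $(n-1)p = \Theta(\ln n)$, so the PMF on $\{0,1,\ldots,t-1\}$ is monotonically increasing (the ratio of consecutive terms $\frac{n-1-i}{i+1}\cdot\frac{p}{1-p}$ exceeds $1$ throughout this range). Consequently the entire lower tail is dominated by its last term:
$$
\Prob{\Bin(n-1,p) < t} \leq t \cdot \binom{n-1}{t-1} p^{t-1} (1-p)^{n-t}.
$$

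Then I would bound each factor separately and argue that the product is $n^{-1+o(1)}$. The factor carrying the $n^{-1}$ is $(1-p)^{n-t} \leq e^{-p(n-t)} = e^{-pn}\cdot e^{pt}$; since $p > \ln n/n$ we get $e^{-pn} \leq n^{-1}$, and since $pt \leq \frac{2\ln n}{n}\cdot \ln^{0.9}n = o(1)$ the second exponential is $1+o(1)$. For the remaining factor I would use $\binom{n-1}{t-1}p^{t-1} \leq \bigl(e(n-1)p/(t-1)\bigr)^{t-1} \leq \bigl(2e\ln n/(t-1)\bigr)^{t-1}$, whose logarithm is at most $\ln^{0.9}n\cdot \ln(2e\ln n) = O(\ln^{0.9}n \cdot \ln\ln n) = o(\ln n)$, so this contributes only $n^{o(1)}$. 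Finally, the prefactor $t \leq \ln^{0.9}n$ is also $n^{o(1)}$.

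The main (and essentially only) obstacle is a bookkeeping one: making sure that the various $o(1)$ slacks—from $pt$, from $(t-1)\ln(2e\ln n/(t-1))$, and from the $t$ prefactor—all absorb neatly into a single $n^{o(1)}$ factor, and handling the degenerate edge case $t=1$ (where the bound collapses to $(1-p)^{n-1} \leq e^{-(n-1)p} \leq n^{-1+o(1)}$ directly). No structural or combinatorial insight is required beyond a careful Chernoff-style computation, and the flexibility $t \leq \ln^{0.9}n$ (rather than, say, $t = \Theta(\ln n)$) gives ample slack to close the estimate.
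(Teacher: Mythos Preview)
Your proposal is correct and follows essentially the same route as the paper: both bound $\Prob{\Bin(n-1,p)<t}$ by $t$ times the largest summand (the paper uses the slightly cruder $t\binom{n}{t}p^{t}(1-p)^{n-1-t}$ without explicitly justifying the monotonicity, whereas you take $t\binom{n-1}{t-1}p^{t-1}(1-p)^{n-t}$ and argue the ratio of consecutive PMF values exceeds $1$), and then both estimate $(1-p)^{n-O(t)}\leq n^{-1+o(1)}$ and $(O(np/t))^{t}=n^{o(1)}$ in the same way. Your added care with the $t=1$ edge case and the explicit monotonicity check are the only differences, and they are cosmetic.
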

\begin{proof}
Let $G=(V,E)\sim\GNP$, then $d_G(v)\sim\Bin(n-1,p)$ holds for every vertex $v\in V$. It follows that $\Prob{v\in\mD_t(G)} = \Prob{\Bin(n-1,p)<t}$ holds for every $t$. Hence, for every integer $t\leq \ln^{0.9}n$ we have
\begin{eqnarray*}
\Prob{v\in\mD_t(G)}&=&\Prob{\Bin(n-1,p)<t}\\
&\leq&\sum_{i=0}^{t-1} \binom{n-1}{i}p^{i}(1-p)^{n-1-i}\\
&\leq&t\binom{n}{t}p^{t}(1-p)^{n-1-t}\\
&\leq&t\cdot\left(\frac{enp}{t}\right)^{t}e^{-p(n-1-t)}\\
&\leq&\ln^{0.9}n\cdot n^{o(1)}\cdot n^{-1+o(1)}\\
&\leq& n^{-1+o(1)}.
\end{eqnarray*}
\end{proof}

Next, we prove and cite some structural properties of the set $\mD_t(\GNM) = \mD_t(G_M)$. In order to prove these results, we resort to the use of $\GNP$, where the analysis is much simpler, and then use Claim \ref{c:GNPtoGNM} to transfer the results to the random graph model $\GNM$.
\begin{clm}\label{c:Small-size}
For every integer $t\leq\ln^{0.9}n$, if $\tG=\{G_i\}_{i=0}^{\binom{n}{2}}$ is the random graph process and $M\geq\tau(\tG;\delta_1)$, then w.h.p. $|\mD_t(G_{M})|\leq n^{0.3}$.
\end{clm}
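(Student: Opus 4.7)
The plan is to work in the binomial model $\GNP$ with $p=m_1/\binom{n}{2}$, transfer the conclusion to $\GNp{n}{m_1}$ via Claim \ref{c:GNPtoGNM}, and from there to the random graph process via monotonicity. By Remark \ref{r:Small-monotone} the sets $\mD_t(G_i)$ are non-increasing in $i$, and by Lemma \ref{l:hittime_mindeg} we have $\tau(\tG;\delta_1)>m_1$ w.h.p.; hence for every $M\geq\tau(\tG;\delta_1)$ it holds that $|\mD_t(G_M)|\leq|\mD_t(G_{m_1})|$ w.h.p. Since Claim \ref{c:GNPtoGNM} costs a factor $\sqrt{2\pi m_1}=n^{1/2+o(1)}$, it suffices to prove the considerably stronger statement $\Prob{|\mD_t(\GNP)|\geq n^{0.3}}=o(n^{-1/2})$.

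For this I would use a union bound over all $s$-subsets of $V$ with $s=\lceil n^{0.3}\rceil$. If $|\mD_t|\geq s$ then some $s$-subset $S\subseteq V$ is entirely contained in $\mD_t$, and for any fixed $v\in S$ the event $v\in\mD_t$ forces the weaker event $d_{\GNP}(v,V\setminus S)<t$. The key observation is that for distinct $v,u\in S$ the random variables $d_{\GNP}(v,V\setminus S)$ and $d_{\GNP}(u,V\setminus S)$ depend on disjoint sets of potential edges and are therefore independent in $\GNP$. This gives
$$\Prob{S\subseteq\mD_t(\GNP)}\leq\Prob{\Bin(n-s,p)<t}^s,$$
and the computation in the proof of Claim \ref{c:prob_v_in_SMALL} (whose argument only uses $p=\Theta(\ln n/n)$ and $t\leq\ln^{0.9}n$; replacing $n-1$ by $n-s$ with $s=n^{0.3}$ costs only a $1+o(1)$ factor, and the slight decrease of $p$ from $\ln n/n$ to $m_1/\binom{n}{2}$ is absorbed into the $o(1)$) yields $\Prob{\Bin(n-s,p)<t}\leq n^{-1+o(1)}$.

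Combining these estimates via $\binom{n}{s}\leq(en/s)^s$ gives
$$\Prob{|\mD_t(\GNP)|\geq n^{0.3}}\leq\binom{n}{s}\cdot n^{-s(1-o(1))}\leq\bigl(e\cdot n^{-0.3+o(1)}\bigr)^s\leq 2^{-s}=2^{-\Omega(n^{0.3})}$$
for $n$ sufficiently large, which is $o(n^{-1/2})$ with enormous room to spare. Transferring back to $\GNp{n}{m_1}$ via Claim \ref{c:GNPtoGNM} and invoking monotonicity completes the argument.

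The delicate point is that a naive first-moment argument is \emph{not} enough: $\Exp{|\mD_t(\GNP)|}$ is only $n^{o(1)}$ (where $o(1)$ tends to $0$ rather slowly, like $\ln\ln n/\ln^{0.1}n$), so Markov merely yields $\Prob{|\mD_t|\geq n^{0.3}}\leq n^{-0.3+o(1)}$, and this is destroyed by the $\sqrt{m_1}=n^{1/2+o(1)}$ loss of Claim \ref{c:GNPtoGNM}. Exploiting independence of degrees of distinct vertices across disjoint edge sets, as the subset union bound does, upgrades the per-vertex estimate $n^{-1+o(1)}$ into an essentially $n^{-s(1-o(1))}$ bound for $s$ simultaneously low-degree vertices, which trivially absorbs the transfer cost.
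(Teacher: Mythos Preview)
Your proposal is correct and follows essentially the same approach as the paper's proof: reduce to $M=m_1$ via Lemma~\ref{l:hittime_mindeg} and the monotonicity of $\mD_t$, then in $\GNP$ take a union bound over $s$-subsets $S$ with $s\approx n^{0.3}$, exploit the independence of the variables $d_G(v,V\setminus S)$ for $v\in S$, and finally transfer to $\GNp{n}{m_1}$ via Claim~\ref{c:GNPtoGNM}. The only cosmetic difference is that the paper carries out the per-vertex tail estimate explicitly (obtaining $n^{-0.89}$ from $p>0.9\ln n/n$) rather than citing Claim~\ref{c:prob_v_in_SMALL}; your remark that the hypothesis $p>\ln n/n$ of that claim can be relaxed to $p=m_1/\binom{n}{2}$ at the cost of an $o(1)$ in the exponent is correct.
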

\begin{proof}
By Lemma~\ref{l:hittime_mindeg} and Remark \ref{r:Small-monotone} it suffices to prove the claim for $M=m_1$. Set $p=M/\binom{n}{2}>0.9\ln n/n$ and let $G\sim\GNP$. Fix a subset $U$ of cardinality $|U|=\lfloor n^{0.3}\rfloor$. We bound the probability that all vertices $U$ have less than $t$ edges emitting outside of $U$. Denote by $N=|V\setminus U|=(1-o(1))n$, and let $u\in U$ be some vertex, then $e_G(u,V\setminus U)\sim\Bin(|V\setminus U|,p)$, and therefore
\begin{eqnarray*}
\Prob{e_G(u,V\setminus U)<t}&\leq&\sum_{i=0}^t\binom{n}{i}p^i(1-p)^{n-i}\\
&\leq&\sum_{i=0}^t\exp\left(i\cdot (1+\ln np -\ln i) - p(n-i)\right)\\
&\leq&n^{-0.89}
\end{eqnarray*}
As the number of edges emitting out of $U$ from each vertex in $U$ are independent random variables (each counting the appearance of edges from a disjoint set of the others), we have that the probability that from every vertex in $U$ there are less than $t$ edges emitting out of $U$ is at most $n^{-0.89|U|}$. There are $\binom{n}{|U|}$ subsets of this cardinality, hence applying the union bound over all these sets yields that the probability there exists such a set $U$ is at most $$\binom{n}{|U|}\cdot n^{-0.89|U|}\leq\exp\left(|U|\cdot(1+\ln\frac{n}{|U|}-0.89\ln n)\right)\leq\exp\left(-0.18|U|\ln n)\right)\leq e^{-n^{0.3}}.$$
By the definition all the vertices of of $\mD_t(G)$ have less than $t$ edges emitting out of it, hence the probability that $|\mD_t(G)|>n^{0.3}$ is at most $e^{-n^{0.3}}$. Applying Claim \ref{c:GNPtoGNM} we have that $\Prob{|\mD_t(G_M)|>n^{0.3}}\leq\sqrt{2\pi M}\cdot\exp\left(-n^{0.3}\right)<2\sqrt{n\ln n}\cdot\exp\left(-n^{0.3}\right)=o(1)$. This concludes the proof of the claim.
\end{proof}

\begin{clm}\label{c:Small-properties}
For every fixed integer $k\geq 1$ and for every integer $t\leq\ln^{0.9} n$, if $\tG=\{G_i\}_{i=0}^{\binom{n}{2}}$ is the random graph process and $M=\tau(\tG;\delta_k)$, then w.h.p $G=G_M$ does not contain a non-empty path of length at most $4$ such that both of its (possibly identical) endpoints lie in $\mD_t(G_M)$.
\end{clm}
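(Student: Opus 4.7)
The plan is to reduce the claim to a statement about the single graph $G_{M_k}$, and then bound the probability of the reduced event via a first moment computation in $\GNP$ transferred by Claim \ref{c:GNPtoGNM}. By Lemma \ref{l:hittime_mindeg} one may assume that $M := \tau(\tG;\delta_k) \in (m_k, M_k)$. Using the monotonicity of the edge set together with Remark \ref{r:Small-monotone}, any bad path in $G_M$ is also a path of length at most $4$ in $G_{M_k}$ whose (possibly identical) endpoints lie in $\mD_t(G_{m_k})$, so it suffices to bound the probability of this compound event in the random graph process.

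The next step eliminates the dependence on $G_{m_k}$ by trading a small additive slack in $t$. Set $A := \lceil 2\ln n/\ln\ln n \rceil$ and $t' := t + A$; since $t' = o(\ln n)$, the computation in the proof of Claim \ref{c:prob_v_in_SMALL} still yields $\Prob{v \in \mD_{t'}(\GNP)} \leq n^{-1+o(1)}$ for $p$ in the relevant range. For each vertex $v$, conditional on $G_{m_k}$, the gain $\Delta_v := d_{G_{M_k}}(v) - d_{G_{m_k}}(v)$ is hypergeometric with mean $\Theta(\ln\ln\ln n)$; Lemma \ref{l:BeckChe} (which applies to the hypergeometric by moment domination) then gives $\Prob{\Delta_v \geq A} \leq n^{-2+o(1)}$, so a union bound over $n$ vertices yields $\max_v \Delta_v < A$ w.h.p. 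On this event one has $\mD_t(G_{m_k}) \subseteq \mD_{t'}(G_{M_k})$, and it suffices to show that w.h.p.\ $G_{M_k}$ contains no path of length at most $4$ both of whose endpoints lie in $\mD_{t'}(G_{M_k})$.

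The last event is a graph property of $G_{M_k} \sim \GNp{M_k}$, so by Claim \ref{c:GNPtoGNM} applied with $p := p^+ = M_k/\binom{n}{2}$ it suffices to show the event has probability $o(M_k^{-1/2})$ in $\GNP$. A first moment estimate handles this: for each $\ell \in \{1,\ldots,4\}$ there are at most $n^{\ell+1}$ ordered simple paths and at most $n^{\ell}$ closed walks (cycles) of length $\ell$; each such structure is present in $\GNP$ with probability $p^\ell$, and conditional on this, the degrees of the two (possibly coinciding) endpoints outside the structure are essentially independent $\Bin(n-O(1),p)$ variables, so the extended Claim \ref{c:prob_v_in_SMALL} bounds the probability that both lie in $\mD_{t'}$ by $n^{-2+o(1)}$ in the simple-path case and $n^{-1+o(1)}$ in the cycle case. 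Summing over $\ell$ yields an expected count at most $(np^+)^4 \cdot n^{-1+o(1)} = (\ln n)^{4+o(1)}/n = n^{-1+o(1)}$, so by Markov's inequality and Claim \ref{c:GNPtoGNM} the probability in $\GNp{M_k}$ is at most $\sqrt{2\pi M_k}\cdot n^{-1+o(1)} = n^{-1/2+o(1)} = o(1)$.

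The main technical subtlety is the need for the slack $A$: a direct transfer of the two-time event (path in $G_{M_k}$, endpoints in $\mD_t(G_{m_k})$) to $\GNP$ would incur a prohibitive loss of order $\sqrt{m_k(M_k-m_k)} \gg \sqrt{M_k}$ from conditioning simultaneously on $|G_{m_k}| = m_k$ and $|G_{M_k}| = M_k$. Controlling the degree gain in the short window $(m_k, M_k]$ first collapses the event to a single-time graph property of $G_{M_k}$, for which the standard $\GNP \to \GNM$ transfer costs only a single $\sqrt{M_k}$ factor that the $n^{-1+o(1)}$ bound from the first moment comfortably absorbs.
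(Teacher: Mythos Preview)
Your argument is correct and takes a genuinely different route from the paper's proof, though both begin with the same reduction via Lemma \ref{l:hittime_mindeg} and Remark \ref{r:Small-monotone}: a bad path in $G_M$ forces a path in $G_{M_k}$ whose endpoints lie in $\mD_t(G_{m_k})$.

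From there the paper works \emph{directly} in the coupled two-level model, taking $G\sim\GNp{M_k}$ and $H\subseteq G$ a uniformly random $m_k$-edge subgraph. For each candidate path $P$ with endpoints $u,w$ it decomposes the event $\{u,w\in\mD_t(H)\}$ into two pieces: either the combined $G$-degree of $u,w$ is already at most $2t'$ (with $t'=\frac{\ln n}{10}$), or it exceeds $2t'$ but the subsampling to $H$ retained at most $t$ edges at each of $u,w$. Both pieces are bounded by explicit hypergeometric tail estimates conditional on the path event, each giving $O(n^{-1.2})$, and a union bound over all sequences of length $r\le 4$ finishes.

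Your approach instead first isolates a \emph{global} high-probability event---that no vertex gains more than $A=\lceil 2\ln n/\ln\ln n\rceil$ in degree across the short window $(m_k,M_k]$---which collapses the two-time condition to the single-time graph property ``$G_{M_k}$ has a short path with both endpoints in $\mD_{t+A}(G_{M_k})$''. You then handle that by a first-moment computation in $\GNP$ transferred via Claim \ref{c:GNPtoGNM}, invoking (the computation behind) Claim \ref{c:prob_v_in_SMALL} essentially as a black box at the shifted threshold $t'=t+A=o(\ln n)$. This is more modular: one short preliminary lemma on degree gains buys you a clean single-time first-moment argument, at the price of the $\sqrt{M_k}$ loss from Claim \ref{c:GNPtoGNM}, which your $n^{-1+o(1)}$ bound absorbs. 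The paper's version avoids that transfer cost entirely by never leaving the $\GNM$-style coupling, but pays with a more intricate per-path case analysis. The two proofs are of comparable length; yours separates concerns more cleanly, while the paper's gets a slightly sharper exponent without needing the auxiliary degree-gain step.
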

\begin{proof}
Clearly, it suffices to consider the case $t=\ln^{0.9}n$. We will
prove the claim for two distinct endpoints in $\mD_t(G_M)$, and for
paths of length $2\leq r\leq 4$ between them, where the other cases
are similar (and a little simpler). By Lemma \ref{l:hittime_mindeg}
we can assume that $m_k<\tau(\tG;\delta_k)<M_k$, and hence it
follows by Remark \ref{r:Small-monotone} that
$\mD_t(G_M)\subseteq\mD_t(G_{m_k})$. Furthermore, if there exists a
path of length $r$ connecting two vertices of $\mD_t(G_M)$, then all
edges of this path are present in the graph $G_{M_k}$ as well.
Combining these two observations, we can upper bound the probability
that a path of length $r$ in $G_M$ connects two vertices of
$\mD_t(G_M)$ by the probability that a path of length $r$ in
$G_{M_k}$ connects two vertices of $\mD_t(G_{m_k})$. Thus, in fact
we need to analyze the random graph process $\tG$ at two different
points, $G_{m_k}$ and $G_{M_k}$. We do this by considering the
following setting where $G\sim\GNp{M_k}$ and $H\subseteq G$ is a
random subgraph of $G$, generated by selecting uniformly at random
$m_k$ of the edges of $G$.

Let $u,w \in V(G)$ and let $P = (u=v_0,\ldots,v_r=w)$ be a sequence of vertices of $V(G)$, where $2\leq r\leq 4$. Denote by $\mA_P$ the event $\{v_i, v_{i+1}\} \in E(G)$ for every $0 \leq i \leq r-1$. Denoting $N=\binom{n}{2}$ we have
\begin{equation}\label{e:Prob_P_Small}
\Prob{\mA_P}=\left(\frac{\binom{N-r}{M_k-r}}{\binom{N}{M_k}}\right)<1.01\left(\frac{\ln n}{n}\right)^r.
\end{equation}

Let $t'=\frac{\ln n}{10}$ and let $u,w \in V(G)$. Denote by $\mB_{u,w}$ the event that both $u$ and $w$ are elements of $\mD_t(H)$, by $\mB'_{u,w}$ the event that both $u$ and $w$ are elements of $\mD_{t'}(G)$, and by $\mC_{u,w}$ the event that, for each of $u$ and $w$, at most $t$ incident edges of $G$ were selected to be in $H$. Denote by $X_{u,w} = X_{u,w}(G)$ the random variable which counts the number of edges of $G$ that are incident with $u$ or $w$. Denote by $\mB''_{u,w}$ the event $X_{u,w}\leq 2t'$, then clearly $\mB'_{u,w} \subseteq \mB''_{u,w}$. Clearly, if $\{u,w\} \subseteq \mD_t(H)$, then either $\{u,w\} \subseteq \mD_{t'}(G)$ or $\{u,w\}$ is not contained in $\mD_{t'}(G)$ but, for each of $u$ and $w$, at most $t$ incident edges of $G$ were selected to be in $H$. It follows that $\mB_{u,w}\subseteq\mB''_{u,w}\cup\left(\mC_{u,w}\cap \overline{\mB''_{u,w}}\right)$. Putting this together implies
\begin{equation*}
\Prob{\mB_{u,w}\wedge\mA_P}\leq\Prob{\mA_P}\cdot\left(\cProb{}{\mB''_{u,w}}{\mA_P}+\cProb{}{\mC_{u,w}}{\mA_P\wedge\overline{\mB''_{u,w}}}\right).
\end{equation*}

When considering $X_{u,w}(G)$, the conditioning on $\mA_P$ and the fact that $r\geq 2$ implies that the two edges $\{u,v_1\}$ and $\{v_{r-1},w\}$ are present in $G$. It follows that $\left[(X_{u,w}-2)\right|\left.\mA_P\right]$ is distributed according the to hypergeometric distribution with parameters $N-r$, $M_k-r$, and  $2n-5$. Hence
\begin{eqnarray*}
\cProb{}{\mB''_{u,w}}{\mA_P}&=&\cProb{}{X_{u,w}-2 \leq 2t'-2}{\mA_P}\\
&\leq& \sum_{j=0}^{2t'-2}\binom{2n-5}{j}\cdot\frac{\binom{N-r-2n+5}{M_k-r-j}}{\binom{N-r}{M_k-r}}\\
&\leq&2t'\cdot\binom{2n}{2t'}\cdot\left(\frac{M_k-r}{N-M_k+2t'}\right)^{2t'}\cdot\left(\frac{N-r-2n+5}{N-r}\right)^{M_k-r-2t'}\\
&\leq&2t'\cdot\left(\frac{en(M_k-r)}{t'(N-M_k+2t')}\right)^{2t'}\cdot\exp\left(-(M_k-r-2t')\cdot\frac{2n-5}{N-r}\right)\\
&\leq&2t'\cdot\exp\left(2t'\ln30-1.9\ln n\right)\\
&\leq&2t'\cdot\exp\left(-1.21\ln n\right)\\
&\leq&0.45n^{-1.2}.\\
\end{eqnarray*}
Next, we bound $\cProb{}{\mC_{u,w}}{\mA_P\wedge\overline{\mB''_{u,w}}}$. Let $Y_{u,w} = Y_{u,w}(G)$ denote the number of edges of $H$ that are incident with $u$ or $w$, disregarding the edges of $P$ and the edge $\{u,w\}$ if it is in $G$. Hence, we can upper bound $\cProb{}{\mC_{u,w}}{\mA_P\wedge\overline{\mB''_{u,w}}}$ by
\begin{eqnarray*}
\cProb{}{Y_{u,w}\leq 2t-2}{X_{u,w}>2t'}&\leq&\sum_{j=0}^{2t-2}\binom{2t'}{j}\cdot\frac{\binom{M_k-2t'}{m_k-j}}{\binom{M_k}{m_k}}\\
&\leq&2t\cdot\binom{2t'}{2t}\cdot\left(\frac{m_k}{M_k}\right)^{2t}\cdot\left(\frac{M_k-m_k}{M_k-2t}\right)^{2t'-2t}\\
&\leq&2t\cdot\left(\frac{et'm_k}{tM_k}\right)^{2t}\cdot\exp\left(-(2t'-2t)\cdot\ln\left(\frac{M_k-2t}{M_k-m_k}\right)\right)\\
&\leq&\exp\left(\ln (2t)+0.2t\ln\ln n-2t'\cdot\omega(1)\right)\\
&\leq&0.45n^{-1.2}.
\end{eqnarray*}
Plugging in our upper bound for $\Prob{\mA_P}$ from \eqref{e:Prob_P_Small}, we conclude that for every choice of a sequence $P = (u=v_0,\ldots,v_r=w)$ of vertices of $V(G)$, where $2\leq r\leq 4$, the probability that $u,w\in\mD_t(G_M)$ and $\{v_i,v_{i+1}\} \in E(G_M)$ for every $0 \leq i \leq r-1$, is at most $1.01\left(\frac{\ln n}{n}\right)^r\cdot(0.45n^{-1.2}+0.45n^{-1.2})<\frac{\ln^rn}{n^{r+1.2}}$. The number of such sequences of length $r$ is at most $(r+1)!\binom{n}{r+1} \leq n^{r+1}$. Hence, applying a simple  union bound argument over all such sequences we conclude that the probability there exists a path in $G_M$ of length $r \leq 4$, connecting two vertices of $\mD_t(G_M)$, is at most  $n^{r+1} \cdot \frac{\ln^r n}{n^{r+1.2}}=o(1)$, as claimed.
\end{proof}

\begin{clm}\label{c:GNM_smallsets}
For every fixed integer $k\geq 2$, if $\tG=\{G_i\}_{i=0}^{\binom{n}{2}}$ is the random graph process and $M=\tau(\tG;\delta_k)$, then w.h.p. $G_M = (V,E)$ is such that $e_{G_M}(U)<|U|\ln^{0.8}n$ for every subset of vertices $U\subseteq V$ of cardinality $1\leq|U|\leq\frac{n}{\ln^{0.3}n}$.
\end{clm}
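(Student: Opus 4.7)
The plan is to reduce the claim to a union bound computation in the Binomial random graph. By Lemma~\ref{l:hittime_mindeg}, w.h.p.\ $M=\tau(\tG;\delta_k)<M_k$, and since the property in question is inherited by subgraphs (smaller graphs have at most as many edges inside each $U$), it suffices to prove the bound for $G_{M_k}\sim\GNM$. Claim~\ref{c:GNPtoGNM} then lets me move to $\GNP$ with $p=M_k/\binom{n}{2}\leq 2\ln n/n$, provided I can show that $\GNP$ fails the bound with probability $o(1/\sqrt{M_k})=o(1/\sqrt{n\ln n})$.

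Next I would split the range of $u:=|U|$ into two pieces. For $u\leq 2\ln^{0.8}n$ the bound is deterministic: indeed $e_G(U)\leq\binom{u}{2}=u(u-1)/2<u\ln^{0.8}n$ in this range. For $2\ln^{0.8}n<u\leq n/\ln^{0.3}n$, observe that $e_G(U)\sim\Bin(\binom{u}{2},p)$ has expectation $\mu\leq u^2\ln n/n\leq u\ln^{0.7}n$, which is smaller than the target $u\ln^{0.8}n$. I would invoke Lemma~\ref{l:BeckChe} to obtain
$$\Prob{e_G(U)\geq u\ln^{0.8}n}\leq\left(\frac{e\mu}{u\ln^{0.8}n}\right)^{u\ln^{0.8}n}\leq\left(\frac{eu\ln^{0.2}n}{n}\right)^{u\ln^{0.8}n},$$
and then take a union bound over the $\binom{n}{u}$ choices of $U$ and sum over $u$.

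The resulting quantity to estimate is
$$\sum_{u}\left(\frac{en}{u}\right)^u\left(\frac{eu\ln^{0.2}n}{n}\right)^{u\ln^{0.8}n},$$
where $u$ ranges over $2\ln^{0.8}n<u\leq n/\ln^{0.3}n$. Setting $y:=\ln(n/u)\geq 0.3\ln\ln n$, the logarithm of a single term equals $u(y+1)-u\ln^{0.8}n(y-0.2\ln\ln n-1)$; absorbing lower-order terms this is at most $-c\,u\ln^{0.8}n\ln\ln n$ for some constant $c>0$ and all sufficiently large $n$. Hence each term is at most $\exp(-\Omega(\ln^{1.6}n\ln\ln n))$, with the minimum being attained near $u=2\ln^{0.8}n$, and the sum of at most $n$ such terms is super-polynomially small in $n$, comfortably beating the $\sqrt{n\ln n}$ factor coming from Claim~\ref{c:GNPtoGNM}.

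The main subtlety is the balance of exponents at the two ends of the probabilistic range. At the top, the slack $y-0.2\ln\ln n$ is only $\Omega(\ln\ln n)$, and the factor $\ln^{0.8}n$ in the threshold $u\ln^{0.8}n$ is precisely what lifts this slack into a decay strong enough to defeat the binomial coefficient $\binom{n}{u}\leq\exp(u\ln n)$. At the bottom, the probabilistic bound completely breaks down because $u\ln^{0.8}n\approx\binom{u}{2}$ is the trivial maximum, which is why the deterministic cutoff at $u=2\ln^{0.8}n$ is indispensable. I expect verifying this calibration to be the only place in the proof that requires real care.
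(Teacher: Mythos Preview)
Your proof is correct and follows essentially the same approach as the paper: reduce to $\GNP$ with $p\leq 2\ln n/n$, apply Lemma~\ref{l:BeckChe} to each fixed $U$, and take a union bound over all $U$ and all sizes $u$. Two minor differences worth noting: the paper transfers to $\GNM$ via Proposition~\ref{p:MonoGNPtoGNM} (the complement of the property is monotone increasing, so only an $o(1)$ bound in $\GNP$ is required) rather than via Claim~\ref{c:GNPtoGNM}, and the paper does not split off a deterministic range for small $u$ --- the union bound is carried out uniformly over $1\leq u\leq n/\ln^{0.3}n$ and shown to be a geometric series with $o(1)$ first term and ratio.
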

\begin{proof}
By Lemma \ref{l:hittime_mindeg} we can assume that $M<M_k$. As the complement of the property at hand is monotone increasing, it follows by Proposition \ref{p:MonoGNPtoGNM} that it suffices to prove that, if $p = p(n) \leq 2\ln n/n$ and $G\sim\GNP$, then the probability that there exists a subset $U\subseteq V$ of cardinality $1\leq|U|\leq\frac{n}{\ln^{0.3}n}$ such that $e_G(U) \geq |U|\ln^{0.8}n$, tends to $0$ as $n$ tends to infinity. Fix a subset $U$ of cardinality $1\leq u\leq n\cdot\ln^{-0.3}n$, then $e_G(U)\sim Bin(\binom{u}{2},p)$. Since $u\cdot\ln^{0.8}n\geq\binom{u}{2}\cdot p$, we can apply Lemma \ref{l:BeckChe} to upper bound the probability that $e_G(U)$ is too large. We can then upper bound the probability the claim is violated by applying a union bound argument as follows
\begin{eqnarray*}
\sum_{u=1}^{n\cdot\ln^{-0.3}n}\binom{n}{u}\Prob{e_G(U)\geq u\cdot\ln^{0.8}n}&\leq& \sum_{u=1}^{n\cdot\ln^{-0.3}n}\left(\frac{en}{u}\right)^{u}\cdot\left(\frac{e\binom{u}{2}p}{u\cdot\ln^{0.8}n}\right)^{u\cdot\ln^{0.8}n}\\
&\leq&\sum_{u=1}^{n\cdot\ln^{-0.3}n}\left(e^{\ln^{0.8} n +1}\cdot\left(\frac{u}{n}\right)^{\ln^{0.8}n -1}\cdot(\ln^{0.2}n)^{\ln^{0.8} n}\right)^u\\
&\leq&\sum_{u=1}^{n\cdot\ln^{-0.3}n}\left(4\cdot\left(\frac{u}{n}\right)^{0.99}\cdot(\ln^{0.2}n)\right)^{u\ln^{0.8} n}\\
&\leq&\sum_{u=1}^{n\cdot\ln^{-0.3}n}\left(\ln^{-0.09}n\right)^{u\ln^{0.8} n}\\
&=&o(1),
\end{eqnarray*}
where the last equality follows from the fact that we are summing a geometric series with a first element and quotient both being $o(1)$. This concludes the proof of the claim.
\end{proof}
\begin{clm}\label{c:GNM_bigsets}
For every fixed integer $k\geq 1$ and for an integer $r = \frac{n}{2\ln^{0.4}n}$, if $\tG=\{G_i\}_{i=0}^{\binom{n}{2}}$ is the random graph process and $M=\tau(\tG;\delta_k)$, then w.h.p. $e_{G_M}(U,W)\geq n\ln^{0.1}n$ for every pair of disjoint subsets $U,W\subseteq V(G_M)$ of cardinality $|U|=|W|=r$.
\end{clm}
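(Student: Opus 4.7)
The plan is to reduce the claim to a standard first-moment computation in the binomial random graph model. First I would invoke Lemma~\ref{l:hittime_mindeg} to assume $m_k < M < M_k$ w.h.p., so it suffices to show that $G_{m_k}$ already enjoys the edge-count property: since "$e(U,W) \geq n\ln^{0.1}n$ for all disjoint $U,W$ of size $r$" is a monotone increasing graph property, adding more edges only preserves it, and $G_M \supseteq G_{m_k}$. Because the property is monotone increasing, I can transfer from $\GNM$ to $\GNP$ via Proposition~\ref{p:MonoGNPtoGNM}, which is what makes life easy here (in contrast to Claim~\ref{c:Small-properties}, where Claim~\ref{c:GNPtoGNM} had to be used). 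So the whole task becomes: setting $p = m_k/\binom{n}{2} = (1+o(1))\ln n/n$ and $G\sim\GNP$, show that w.h.p.\ every pair of disjoint $r$-sets spans at least $n\ln^{0.1}n$ edges of $G$.

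Second, I would fix disjoint sets $U,W\subseteq V$ of size $r = n/(2\ln^{0.4} n)$. The number of potential edges between them is $r^2$, each present independently with probability $p$, so $e_G(U,W)\sim \Bin(r^2,p)$ with
\[
\Exp{e_G(U,W)} = r^2 p = (1-o(1))\cdot\frac{n\ln^{0.2} n}{4},
\]
which is of much larger order than the target $n\ln^{0.1}n$. Applying the lower-tail Chernoff bound (Theorem~\ref{t:Chernoff}, part~1) with $\varepsilon = 1-o(1)$ gives
\[
\Prob{e_G(U,W) < n\ln^{0.1}n} \leq \exp\!\left(-\Omega\!\left(n\ln^{0.2} n\right)\right).
\]

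Third, I would close with a union bound over pairs of disjoint $r$-sets. There are at most $\binom{n}{r}\binom{n-r}{r}\leq (en/r)^{2r} = (2e\ln^{0.4}n)^{2r}$ such pairs, and
\[
\ln\!\left[(2e\ln^{0.4}n)^{2r}\right] = O(r\ln\ln n) = O\!\left(\frac{n\ln\ln n}{\ln^{0.4}n}\right),
\]
which is negligible compared with the $n\ln^{0.2}n$ gained from Chernoff. Thus the union-bound probability is $o(1)$, so $G\sim\GNP$ w.h.p.\ satisfies the desired edge-count property; invoking Proposition~\ref{p:MonoGNPtoGNM} transfers this to $G_{m_k}\sim\GNp{m_k}$, and monotonicity finishes the claim for $G_M$.

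There is really no serious obstacle: the comfortable gap between $\Exp{e_G(U,W)} = \Theta(n\ln^{0.2}n)$ and the threshold $n\ln^{0.1}n$ easily absorbs the entropy cost $O(n\ln\ln n/\ln^{0.4}n)$ of the union bound, and monotonicity removes any need for a finer coupling. The only point requiring minor care is lining up the constants in the Chernoff exponent so that the $\varepsilon\to 1$ regime still yields $\exp(-\Omega(n\ln^{0.2}n))$, which is routine.
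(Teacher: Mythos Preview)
Your proposal is correct and follows essentially the same approach as the paper's proof: reduce via Lemma~\ref{l:hittime_mindeg} and the monotonicity of the property to $\GNP$ with $p\geq \ln n/n$ through Proposition~\ref{p:MonoGNPtoGNM}, then apply the lower-tail Chernoff bound to $e_G(U,W)\sim\Bin(r^2,p)$ and close with a union bound over pairs of $r$-sets. The paper carries out exactly this computation, with the same expectation $\Exp{e_G(U,W)}\geq n\ln^{0.2}n/4$ and the same observation that the Chernoff exponent $\Theta(n\ln^{0.2}n)$ dominates the entropy cost $2r\ln(en/r)=O(r\ln\ln n)$.
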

\begin{proof}
By Lemma \ref{l:hittime_mindeg} we can assume that $M>m_k$. As the property at hand is monotone increasing, it follows by Proposition \ref{p:MonoGNPtoGNM} that it suffices to prove the claim for $G\sim\GNP$ with $p\geq\frac{\ln n}{n}$. Fix a pair of disjoint subsets $U, W \subseteq V(G)$ of cardinality $r$ each. Then $e_G(U,W)\sim\Bin(r^2,p)$, and thus $\Exp{e_G(U,W)}\geq\frac{n\ln^{0.2}n}{4}$. We upper bound the probability that $e_G(U,W)$ is too large using Theorem \ref{t:Chernoff}. We can then upper bound the probability the claim is violated by applying a union bound argument as follows
\begin{eqnarray*}
\binom{n}{r}\binom{n-r}{r}\Prob{e_G(U,W)< n\ln^{0.1}n}&\leq&\left(\frac{en}{r}\right)^{2r}\exp\left(-\frac{\left(1-\frac{4}{\ln^{0.1}n}\right)^2r^2p}{2}\right)\\
&\leq&\exp\left(r\left(2+\ln\ln n-\frac{\ln^{0.2}n}{10}\right)\right)\\
&=&o(1).
\end{eqnarray*}
This concludes the proof of the claim.
\end{proof}

Finally, we prove that removing vertices of small degree from a random graph with an appropriate number of edges typically results in a graph on which Maker can win the expander game. In fact, we even show that Maker can win the game when this graph is thinned substantially (that is, the vast majority of edges are removed). This stronger property will play a crucial role in the proof of Theorem \ref{t:hamilton_game_hittime}. Our proof will make use of results we have obtained in Claims \ref{c:Small-size}, \ref{c:GNM_smallsets}, \ref{c:GNM_bigsets} and in Lemma \ref{l:split}.
\begin{lem}\label{l:splitGNM_noSMALL}
For every $\alpha>0$ and for every fixed integer $k\geq2$, if $\tG=\{G_i\}_{i=0}^{\binom{n}{2}}$ is the random graph process and $M=\tau(\tG;\delta_k)$, then w.h.p. $G'=(V',E') := G_M \setminus \mD_{\ln^{0.9}n}(G_M)$ on $n'$ vertices contains a spanning subgraph $\hG\subseteq G'$ with at most $2n'\ln^{0.97}n'$ edges, such that $\hG\in\mM_{\mX_{R,c}}$ for every $0<c\leq\ln^{0.02}n'$ and $R\leq (1-\alpha)\frac{n'}{c+1}$.
\end{lem}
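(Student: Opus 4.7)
The plan is to construct $\hG$ as a $p^*$-random subgraph of $G'$ with $p^* = \ln^{-0.03}n$ and to verify, with positive probability, that $\hG$ meets the hypotheses of Theorem~\ref{t:expgame} for every admissible $(R,c)$; the required lower bound on $\delta(\hG)$ will be obtained via an FKG-style existence argument rather than a per-vertex Chernoff union bound, and that is the main obstacle.

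First, by Lemma~\ref{l:hittime_mindeg} we may assume $M < M_k$, so $e(G_M) \leq n\ln n$, and by Claims~\ref{c:Small-size}, \ref{c:Small-properties}, \ref{c:GNM_smallsets}, and \ref{c:GNM_bigsets} we may also condition on the w.h.p.\ events that $|\mD_{\ln^{0.9}n}(G_M)| \leq n^{0.3}$, that no two vertices of $\mD_{\ln^{0.9}n}(G_M)$ share a neighbor in $G_M$, that $e_{G_M}(U) < |U|\ln^{0.8}n$ for every $U$ with $|U| \leq n/\ln^{0.3}n$, and that $e_{G_M}(U,W) \geq n\ln^{0.1}n$ for every pair of disjoint $U, W$ of size $n/(2\ln^{0.4}n)$. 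These imply that $G'$ has $n' = (1-o(1))n$ vertices, $\delta(G') \geq \ln^{0.9}n - 1$, at most $n\ln n$ edges, and inherits both subset-edge bounds from $G_M$.

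Next, set $\varepsilon := 1/4$ and $r := n'/\ln^{0.4}n'$, and sample $\hG$ from $G'$ by including each edge independently with probability $p^*$. A Chernoff upper tail gives $e(\hG) \leq 2n'\ln^{0.97}n'$ with probability $1-o(1)$, since $\Exp{e(\hG)} \leq p^* n\ln n/2 = n\ln^{0.97}n/2$. For each disjoint pair $U, W$ of $r$-sets, $\Exp{e_{\hG}(U,W)} \geq p^* n\ln^{0.1}n = n\ln^{0.07}n$, and a Chernoff lower tail combined with a union bound over the at most $\exp(O(n\ln\ln n/\ln^{0.4}n))$ such pairs produces $e_{\hG}(U,W) \geq n\ln^{0.06}n$ for every such pair, with probability $1-o(1)$. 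For the minimum degree, each vertex $v$ has $\Exp{d_{\hG}(v)} \geq p^*(\ln^{0.9}n - 1) \geq \ln^{0.87}n - 1$; since the median of a binomial is within $1$ of its mean, $\Prob{d_{\hG}(v) \geq \ln^{0.85}n} \geq 1/2$ for each $v$. Because these events are increasing in the included edges, the FKG inequality produces $\Prob{\delta(\hG) \geq \ln^{0.85}n} \geq 2^{-n'}$, which dominates the $o(1)$ failure probabilities of the other two events. Hence there exists a realization of $\hG$ satisfying all three properties simultaneously.

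Finally, fix such an $\hG$ and any $c \in (0, \ln^{0.02}n']$. For $n'$ sufficiently large the hypotheses of Theorem~\ref{t:expgame} with this $r$ and $\varepsilon$ are met: $r \leq \min\{n'/(c+2), n'/e^{30}\}$; $\varepsilon > 1/(2\delta(\hG))$; property \textbf{Q\ref{i:split_bigsets}} with $K = n'/(r(1-2\varepsilon)) = 2\ln^{0.4}n'$ asks for $e_{\hG}(U,W) \geq 0.8 n'\ln\ln n'$, which is comfortably covered by the $n\ln^{0.06}n$ bound; and property \textbf{Q\ref{i:split_smallsets}} reduces, via $\hG \subseteq G'$, to requiring $\ln^{0.8}n \leq \ln^{0.85}n/(40(c+1))$, i.e.\ $40(c+1) \leq \ln^{0.05}n$, which holds once $n'$ is large since $c+1 \leq \ln^{0.02}n'+1$. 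Theorem~\ref{t:expgame} then delivers $\hG \in \mM_{\mX_{R_c,c}}$ with $R_c = (n'-r)/(c+1) \geq (1-\alpha)n'/(c+1)$, and the monotonicity $\mX_{R_c,c} \subseteq \mX_{R,c}$ for every $R \leq R_c$ extends the conclusion to every admissible pair. The main obstacle, as flagged, is the minimum-degree guarantee: a direct per-vertex Chernoff plus union bound fails because the Chernoff deviation of order $\ln^{0.87}n$ cannot beat the union-bound loss of order $\ln n$, and the FKG-based existence argument (analogous to the one inside Lemma~\ref{l:split}) is what resolves this.
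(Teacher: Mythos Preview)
Your proof is correct and follows essentially the same approach as the paper's: random thinning of $G'$ with edge-probability $\ln^{-0.03}n$, an FKG argument for the minimum-degree event giving probability $\geq 2^{-n'}$, and Chernoff bounds for the edge count and for \textbf{Q\ref{i:split_bigsets}} whose failure probabilities are $\ll 2^{-n'}$, so that a good $\hG$ exists. The only cosmetic differences are that you take $\varepsilon=1/4$ whereas the paper takes $\varepsilon=\gamma=\ln^{-0.03}n$ (both choices satisfy Theorem~\ref{t:expgame}), and that where you wrote ``with probability $1-o(1)$'' you should state the stronger $1-o(2^{-n'})$ that your Chernoff bounds actually deliver, since that is what the combination with the $2^{-n'}$ FKG bound requires.
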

\begin{rem}
As was noted in Remark \ref{r:rcexpander_mono}, by the monotonicity of $\mX_{R,c}$, the above lemma can be used to deduce that $G'\in\mM_{\mX_{R,c}}$.
\end{rem}
\begin{proof}
Pick every edge of $G'$ to be an edge of $\hG$ with probability $\gamma=\ln^{-0.03}n$, independently of all other choices. Our goal is to prove that, with positive probability, $\hG$ satisfies the conditions of Theorem \ref{t:expgame}, with parameters
$$\varepsilon=\gamma \qquad \textrm{and} \qquad r=\frac{n'}{\ln^{0.4}n'}.$$
Based on typical properties of the random graph process, we can assume that $G'$ satisfies the following properties:
\begin{enumerate}
\renewcommand{\labelenumi}{\arabic{enumi})}
\item $\delta(G')\geq \ln^{0.9}n$;
\item $e(G')\leq e(G_M) \leq e(G_{M_k}) \leq (1+o(1)) \frac{n\ln n}{2}$ (Lemma \ref{l:hittime_mindeg});
\item $|{\mathcal D}_{\ln^{0.9}n}(G_M)|\leq n^{0.3}$, and therefore $n'\geq n(1-n^{-0.7})$ (Claim \ref{c:Small-size});
\item Every set $U\subseteq V'$ of cardinality $|U|\leq (c+1)r\leq\frac{n}{\ln^{0.3}{n}}$ satisfies $e_{G'}(U) = e_{G_M}(U) \leq |U|\ln^{0.8}n\leq|U|\ln^{0.81}n'$ (Claim \ref{c:GNM_smallsets});
\item Every pair of disjoint subsets $U,W\subseteq V'$ of cardinality $|U|=|W|=r \geq \frac{n}{2\ln^{0.4}n}$ satisfies $e_{G'}(U,W)\geq n\ln^{0.1}n$ (Claim \ref{c:GNM_bigsets}).
\end{enumerate}
It follows that our choice of parameters meets the requirements on $\varepsilon$, $c$ and $r$, made in Theorem \ref{t:expgame}.

We proceed to prove that, with a ``not too small'' probability, $\hG$ satisfies property \textbf{Q\ref{i:split_smallsets}}. First note that every set $U\subseteq V'$ of cardinality $|U|\leq(c+1)r$ satisfies $e_{\hG}(U)\leq e_{G'}(U)\leq|U|\ln^{0.81}n'$. The degree in $\hG$ of every vertex $v \in V'$ is binomially distributed, $d_{\hG}(v)\sim\Bin(d_{G'}(v),\gamma)$, with median at least $\lfloor\gamma\delta(G')\rfloor$. Therefore $\Prob{d_{\hG}(v)\geq\lfloor\gamma\delta(G')\rfloor} \geq 1/2$. Since $\delta(G')\geq\ln^{0.9}n$ and since the degrees of every two vertices are positively correlated, using the FKG inequality (see e.g. \cite[Chapter 6]{AloSpe2008}) we have that
$$\Prob{\delta(\hG)\geq\lfloor\ln^{0.87}n\rfloor} \geq \Prob{\delta(\hG)\geq\lfloor\gamma\delta(G')\rfloor} \geq 2^{-n'}.$$
It follows that with probability at least $2^{-n}$ we have $\frac{\varepsilon\delta(\hG)}{10(c+1)}>\ln^{0.81}n'$, and thus $\hG$ satisfies property \textbf{Q\ref{i:split_smallsets}} with probability at least $2^{-n'}$.

Next, we prove that, with ``very large'' probability, $\hG$ satisfies property \textbf{Q\ref{i:split_bigsets}}. Fixing a pair of disjoint sets of vertices $U,W\subseteq V'$ of cardinality $r$ each, it clearly follows that $e_{\hG}(U,W)\sim\Bin(e_{G'}(U,W),\gamma)$, and thus $\Exp{e_{\hG}(U,W)}\geq n\ln^{0.07}n>n'\ln^{0.07}n'$. Since $\frac{n'}{(1-2\varepsilon)}\ln\left(\frac{n'}{r}\right)\leq n'\ln^{0.05}n'$, we can upper bound the probability that the pair $U,W$ does not satisfy property \textbf{Q\ref{i:split_bigsets}} with $K=\frac{n'}{r(1-2\varepsilon)}$, using Theorem \ref{t:Chernoff}.
$$\Prob{e_{\hG}(U,W)<n'\ln^{0.05}n'}\leq\exp\left(-\frac{(1-\ln^{-0.02}n')^2n'\ln^{0.07}n'}{2}\right)\leq \exp\left(-\frac{n'\ln^{0.07}n'}{3}\right).$$
Applying a simple union bound argument we deduce that the probability there exists a pair of disjoint subsets of vertices of cardinality $r$ each, which does not satisfy property \textbf{Q\ref{i:split_bigsets}} with $K=\frac{n'}{r(1-2\varepsilon)}$ is at most
$$\binom{n'}{r}\cdot\binom{n'-r}{r}\cdot\exp\left(-\frac{n'\ln^{0.07}n'}{3}\right)\leq\exp\left(2r\ln\left(\frac{en'}{r}\right)-\frac{n'\ln^{0.07}n'}{3}\right)\leq\exp\left(-\frac{n'\ln^{0.07}n'}{4}\right).$$

Finally, note that $e(\hG)\sim\Bin(e(G'),\gamma)$ and thus $\Exp{e(\hG)}\leq (1+o(1))\frac{n\ln^{0.97}n}{2}$. Hence, using Theorem \ref{t:Chernoff} we deduce
$$\Prob{e(\hG)>2n'\ln^{0.97}n'}<\exp\left(-\frac{n\ln^{0.97}n}{6}\right).$$

Putting it all together we conclude that $\exp\left(-\frac{n\ln^{0.97}n}{6}\right)+\exp\left(-\frac{n'\ln^{0.07}n'}{4}\right)<2^{-n}$, and hence there exists a subgraph $\hG\subseteq G'$ with $e(\hG)\leq 2n'\ln^{0.97}n'$ and which satisfies the conditions of Theorem \ref{t:expgame}. It follows that $\hG\in\mM_{\mX_{R,c}}$ as claimed.
\end{proof}

\section{Hitting time of the $k$-vertex connectivity and perfect matching games}\label{s:HitTimePerfMatchKConn}
This short section is devoted to the proofs of Theorems \ref{t:kconn_game_hittime} and \ref{t:perf_match_game_hittime}. These two theorems are simple corollaries of the results presented in the previous sections.
\subsection{$k$-vertex connectivity}
As already mentioned in Section \ref{s:Preliminaries} we will provide a sufficient condition on $R$ and $c$ such that an $(R,c)$-expander will surely be $k$-vertex connected.
\begin{lem} \label{l:expansionToKconnectivity}
For every positive integer $k$, if $G=(V,E)$ is an $(R,c)$-expander such that $c\geq k$, and $Rc\geq\frac{1}{2}(|V|+k)$, then $G\in\VertConn{k}$.
\end{lem}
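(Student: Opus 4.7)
The proof will be a standard contradiction argument that plays off the expansion property against the existence of a small vertex separator.

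The plan is to assume for contradiction that $G$ fails to be $k$-vertex connected, so that there is some set $S \subseteq V$ with $|S| \leq k-1$ whose removal disconnects $G$. Write $V \setminus S = A \sqcup B$ where both parts are non-empty and $A$ is the smaller side, so that $1 \leq |A| \leq (|V|-|S|)/2$, and observe that since $S$ separates $A$ from $B$, every vertex of $N_G(A)$ lies in $S$; in particular $|N_G(A)| \leq |S| \leq k-1$. The same inclusion holds for the neighborhood in $G$ of any subset $A' \subseteq A$: namely $N_G(A') \subseteq (A \setminus A') \cup S$.

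From here I would split into two cases according to the size of $A$. If $|A| \leq R$, then the expansion property applied to $A$ itself gives $|N_G(A)| \geq c|A| \geq c \geq k$, contradicting $|N_G(A)| \leq k-1$. If $|A| > R$, pick an arbitrary $A' \subseteq A$ with $|A'| = R$. The inclusion $N_G(A') \subseteq (A \setminus A') \cup S$ yields
\[
|N_G(A')| \leq (|A|-R) + |S| \leq \frac{|V|-|S|}{2} - R + |S| = \frac{|V|+|S|}{2} - R \leq \frac{|V|+k-1}{2} - R,
\]
whereas the expansion property gives $|N_G(A')| \geq cR \geq \frac{|V|+k}{2}$, leading to $\frac{|V|+k}{2} \leq \frac{|V|+k-1}{2} - R$, i.e.\ $R \leq -\frac12$, a contradiction.

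The hypothesis $c \geq k$ is exactly what is needed to kill the small-set case, and the hypothesis $Rc \geq (|V|+k)/2$ is exactly what is needed to kill the large-set case; so both appear naturally. There isn't really a significant obstacle here—the only thing to be careful about is the bookkeeping of which neighborhood inclusion is used (the \emph{outside} neighborhood $N_G(A') \subseteq (A \setminus A') \cup S$, which uses that $A$ and $B$ are separated by $S$ in $G$). Everything else is a one-line calculation.
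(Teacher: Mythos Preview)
Your proof is correct and follows the same overall contradiction-and-case-split structure as the paper: assume a separator $S$ of size at most $k-1$, and distinguish whether the smaller side has at most $R$ vertices or more. The small-side case is handled identically.

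For the large-side case your counting differs slightly from the paper's. You take a single subset $A' \subseteq A$ of size $R$, bound $|N_G(A')| \leq |A|-R+|S|$, and then use $|A| \leq (|V|-|S|)/2$ to force $cR \leq (|V|+k-1)/2 - R$. The paper instead takes subsets $A_1, A_2$ of size $R$ from \emph{two} distinct components of $G\setminus S$ and observes that $N_G(A_1)\cap N_G(A_2)\subseteq S$, so inclusion--exclusion yields $|V| \geq |N_G(A_1)|+|N_G(A_2)|-|S| \geq 2Rc-(k-1)\geq |V|+1$. Both arguments exploit the same hypothesis $Rc \geq (|V|+k)/2$ in essentially the same way; yours is marginally more elementary in that it only needs one expanding set and the trivial bound on the smaller side, while the paper's two-set version avoids any reference to which side is smaller. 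Neither approach has a real advantage over the other here.
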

\begin{proof}
Assume for the sake of contradiction that there exists some set $S\subseteq V$ of size $k-1$ whose removal disconnects $G$. Denote the connected components of $G \setminus S$ by $S_1,\ldots,S_t$, where $t\geq 2$ and $1\leq|S_1|\leq\ldots\leq|S_t|$. If $|S_1|\leq R$, then $k-1=|S|\geq|N_G(S_1)|\geq c|S_1| \geq c\geq k$, which is clearly a contradiction. Assume then that $|S_1|>R$. For $i\in\{1,2\}$, let $A_i\subseteq S_i$ be an arbitrary subset of size $R$. It follows that $|V|\geq|S_1\cup S_2\cup N_G(S_1)\cup N_G(S_2)|\geq|N_G(A_1)\cup N_G(A_2)|=|N_G(A_1)|+|N_G(A_2)|-|N_G(A_1)\cap N_G(A_2)|\geq 2Rc-|S|\geq |V|+1$, which is clearly a contradiction. It follows that $G$ is $k$-vertex-connected as claimed.
\end{proof}
In order to prove Theorem \ref{t:kconn_game_hittime} it thus suffices to show that w.h.p. at the moment the random graph process first reaches minimum degree $2k$, Maker has a winning strategy for the $(R,c)$-expander game for suitably chosen values of $R$ and $c$. In doing so we will heavily rely on Theorem \ref{t:expgame}.
\begin{proof}[Proof of Theorem \ref{t:kconn_game_hittime}]
Fix some positive integer $k\geq 1$ and let $\tG=\{G_i\}_{i=0}^{\binom{n}{2}}$ denote the random graph process. Set $M=\tau(\tG;\delta_{2k})$, let $G=G_M$, $\Small=\mD_{\ln^{0.9}n}(G)$, $G'=G[V\setminus\Small]$ and denote by $n'$ the number of vertices in $G'$. Setting $c=k+2$, and $R=\frac{n'}{k+4}$, the conditions of Lemma \ref{l:splitGNM_noSMALL} are met, and thus $G'\in\mM_{\mX_{\frac{n'}{k+4},k+2}}$.

Maker's strategy is quite natural. He splits the board into $F_1=E(G')$ and $F_2=E_G(\Small,V\setminus \Small)$, and plays the corresponding two games in parallel, that is, in each move Maker will claim an edge of the board Breaker chose his last edge from (except for possibly his last move in one of the two games). Playing on the edges of $F_1$, Maker aims to build an $(\frac{n'}{k+4},k+2)$-expander. As noted above, Maker has a winning strategy for this game. Playing on the edges of $F_2$, Maker follows a simple pairing strategy which guarantees that, by the end of the game, the graph $H$ which Maker constructs will satisfy $d_H(v)\geq\lfloor d_G(v)/2\rfloor$ for every $v\in\Small$. To achieve this goal, whenever Breaker claims an edge which is incident with some vertex $v\in\Small$, Maker responds by claiming a different edge incident with $v$ if such an edge exists, and otherwise he claims an arbitrary free edge of $F_1 \cup F_2$. Since the minimum degree in $G$ is $2k$, it follows by Maker's strategy for the game on $F_2$ and by Claim \ref{c:Small-properties}, that in Maker's graph $H$, the vertices of $\Small$ form an independent set with $k$ edges emitting out of each vertex. Since the graph $H'=H[V\setminus\Small]$ is an $(\frac{n'}{k+4},k+2)$-expander, and since $(k+2)\cdot\frac{n'}{k+4}\geq\frac{1}{2}(n+k)$ holds for every $k\geq 1$ by Claim~\ref{c:Small-size}, Lemma \ref{l:expansionToKconnectivity} implies that $H'\in\VertConn{k}$. Adding to $H'$ the vertices of $\Small$ with their incident edges clearly keeps the $k$-vertex connectivity property, as connecting a new vertex to at least $k$ vertices of a $k$-vertex connected graph produces a $k$-vertex connected graph. This concludes the proof of the theorem.
\end{proof}

\subsection{Perfect matching}
Next, in order to show that expansion entails admitting a perfect matching, we make use of the well-known Berge-Tutte formula for the size of a maximum matching in a graph (see e.g. \cite[Corollary 3.3.7]{Wes2001}).
\begin{thm}[Berge-Tutte]\label{t:BerTut58} The maximum number of vertices which are saturated by a matching in a graph $G=(V,E)$ is $\min_{S\subseteq V}\left\{|V|+|S|-o(G-S)\right\}$.
\end{thm}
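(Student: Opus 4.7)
The plan is to establish the Berge–Tutte identity by proving the two matching inequalities separately: the easy ``upper bound'' direction that no matching can saturate more than $|V|+|S|-o(G-S)$ vertices for any $S$, and the ``lower bound'' direction that some matching actually attains this minimum. I would derive the latter from Tutte's $1$-factor theorem by the standard device of adjoining a universal set of new vertices and applying Tutte's criterion to the augmented graph.

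For the upper bound, fix any $S\subseteq V$ and any matching $M$ of $G$. In $G-S$, each odd component has odd order, so the restriction of $M$ to that component leaves at least one vertex unmatched inside the component; such a vertex can only be saturated by $M$ in the whole graph if it is paired with some vertex of $S$. Since the $|S|$ vertices of $S$ can be used to absorb at most $|S|$ odd components in this way, at least $o(G-S)-|S|$ vertices of $V$ remain unsaturated by $M$, so $M$ saturates at most $|V|+|S|-o(G-S)$ vertices. Minimizing over $S$ yields the upper bound.

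For the lower bound, set $d=\max_{S\subseteq V}\bigl(o(G-S)-|S|\bigr)$. A parity check shows $o(G-S)-|S|\equiv |V|\pmod 2$, so $d\geq 0$ has the same parity as $|V|$ and $|V|+d$ is even. Form $G'$ from $G$ by adding a set $T$ of $d$ new vertices, each joined to every other vertex of $G'$. I claim $G'$ satisfies Tutte's condition $o(G'-S')\leq|S'|$ for every $S'\subseteq V(G')$. Indeed, if $T\not\subseteq S'$ then any remaining new vertex is adjacent to all of $V(G')\setminus S'$, so $G'-S'$ is connected and $o(G'-S')\leq 1\leq|S'|$ (the degenerate case $S'=\emptyset$ is handled by $|V(G')|$ being even). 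If $T\subseteq S'$, write $S'=T\cup S$ with $S\subseteq V$; then $G'-S'=G-S$ and $o(G'-S')=o(G-S)\leq|S|+d=|S'|$ by the definition of $d$. Tutte's theorem then gives a perfect matching $M'$ of $G'$; restricting $M'$ to edges inside $V(G)$ leaves unsaturated at most the $d$ original vertices that are matched to vertices of $T$, so this restricted matching saturates at least $|V|-d$ original vertices. By choice of $d$, $|V|-d=\min_{S}\bigl(|V|+|S|-o(G-S)\bigr)$, completing the lower bound.

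The main obstacle is the lower bound, since the upper bound is a direct parity/counting argument. The delicate points are the parity check that makes $G'$ have even order, and the case analysis verifying Tutte's condition for $G'$ depending on whether the test set $S'$ swallows all of $T$; assuming Tutte's theorem, both are routine, which is why Berge–Tutte is normally quoted rather than reproved from scratch.
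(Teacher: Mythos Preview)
Your argument is correct and is the standard derivation of the Berge--Tutte formula from Tutte's $1$-factor theorem via the auxiliary graph with $d$ universal vertices. Note, however, that the paper does not actually prove this statement: it is quoted as a classical result with a reference to West's textbook (Corollary~3.3.7), so there is no in-paper proof to compare your approach against.
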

The following lemma is applicable regardless of the parity of the number of vertices in the graph.
\begin{lem} \label{l:expansionToMatching}
If $G=(V,E)$ is an $(R,c)$-expander such that $c\geq 2$ and
$(c+1)R\leq|V|<2Rc-8c$, then $G\in\PerfMatch$.
\end{lem}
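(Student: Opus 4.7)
\medskip
\noindent\textbf{Proof plan.} The plan is to argue by contradiction using the Berge--Tutte formula (Theorem~\ref{t:BerTut58}). Suppose $G$ has no matching of size $\lfloor |V|/2 \rfloor$; then there exists $S \subseteq V$ with $|V|+|S|-o(G-S) < 2\lfloor|V|/2\rfloor$. Since $o(G-S)-|S|$ has the same parity as $|V|$, this forces $o(G-S) \geq |S|+2$. Write $q := o(G-S)$ and let $C_1,\ldots,C_q$ denote the odd components of $G-S$ with sizes $c_1 \leq \cdots \leq c_q$. The key observation I will use repeatedly is that whenever $W \subseteq V\setminus S$ is a union of whole components of $G-S$, one has $N_G(W) \subseteq S$, since different components of $G-S$ have no edges between them.

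\medskip
\noindent\textbf{Case 1:} $c_1+\cdots+c_{q-1} \leq R$. Set $W := C_1\cup\cdots\cup C_{q-1}$. Then $|W|\leq R$, so by the $(R,c)$-expansion $|S|\geq |N_G(W)| \geq c|W| \geq c(q-1) \geq c(|S|+1)$. Since $c\geq 2$ this yields $|S|\leq -c/(c-1)<0$, a contradiction.

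\medskip
\noindent\textbf{Case 2:} $c_1+\cdots+c_{q-1}>R$. Here I will construct two disjoint sets $A,B\subseteq V\setminus S$, each of size exactly $R$, whose supporting component collections in $G-S$ are disjoint; this guarantees $N_G(A)\cap N_G(B)\subseteq S$. Concretely let $k$ be the largest index with $c_1+\cdots+c_k \leq R$ (so $k\leq q-2$ by the Case~2 hypothesis), and take $A := C_1\cup\cdots\cup C_k \cup D$ for some $D\subseteq C_{k+1}$ chosen to make $|A|=R$. For $B$ I take a size-$R$ subset of $C_q$ when $c_q\geq R$, and otherwise (when all components are smaller than $R$) a suitable union of components chosen from $\{C_{k+2},\ldots,C_q\}$ together with any even components, using that $|V\setminus(S\cup A)|=|V|-|S|-R\geq cR-|S|$ is large enough to accommodate such a packing once $c\geq 2$. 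Once $A,B$ are constructed, combining $|N_G(A)|,|N_G(B)|\geq cR$, $N_G(A)\cap N_G(B)\subseteq S$, and $N_G(A)\cup N_G(B)\subseteq V\setminus(A\cup B)$ (so of size at most $|V|-2R$) gives
\[
2cR-|S| \leq |N_G(A)|+|N_G(B)|-|N_G(A)\cap N_G(B)| \leq |V|-2R,
\]
that is, $2(c+1)R\leq |V|+|S|$. Using the hypothesis $|V|<2Rc-8c$ this forces $|S|>2R+8c$.

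\medskip
\noindent\textbf{Deriving the contradiction.} The lower bound $|S|>2R+8c$ is then pitted against an opposite upper bound obtained from the transversal $X=\{x_1,\ldots,x_q\}$ with $x_i\in C_i$: using expansion on $X$ (or on an $R$-subset of $X$ when $q>R$) and the fact that $N_G(X)\setminus S\subseteq \bigcup_i (C_i\setminus\{x_i\})$ has size at most $\sum_i(c_i-1)\leq|V|-|S|-q$, one obtains $(c+1)q\leq |V|+|S|$, and together with $q\geq |S|+2$ this yields $|S|\leq (|V|-2(c+1))/c$, which with $|V|<2Rc-8c$ gives $|S|<2R-10-2/c$. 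The two inequalities $2R+8c<|S|<2R-10$ are incompatible, completing the proof.

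\medskip
\noindent\textbf{Main obstacle.} The delicate step is the construction of $B$ in Case~2 when no component has size $\geq R$: one has to show that the components in $V\setminus(S\cup A)$ can always be packed into a union of total size exactly $R$. This is where the bin-packing feasibility must be verified carefully, and it is precisely the $-8c$ slack in the hypothesis $|V|<2Rc-8c$ that provides the room needed to make the two opposing bounds on $|S|$ actually conflict across all values of $c\geq 2$.
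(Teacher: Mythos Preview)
Your plan has a structural gap that breaks the argument as written. In Case~2 you derive $|S|>2R+8c$ from the two $R$-sets $A,B$, and then invoke the transversal bound to get $|S|<2R-10-2/c$. But once $|S|>2R+8c$ you have $q\geq|S|+2>2R+8c+2>R$, so the transversal step \emph{must} be run in the regime $q>R$. There your parenthetical ``(or on an $R$-subset of $X$ when $q>R$)'' does not yield the inequality $(c+1)q\leq|V|+|S|$ you claim: applying expansion to an $R$-subset $X'\subseteq X$ gives only $cR\leq|N_G(X')|\leq|N_G(X)|\leq|V|-q$, hence $q\leq|V|-cR$, which combined with $q\geq|S|+2$ gives $|S|\leq|V|-cR-2<Rc-8c-2$. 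For $c\geq3$ and $R$ moderately large this is \emph{larger} than $2R+8c$, so no contradiction results. In short, the two bounds you intend to clash are never simultaneously available: the lower bound on $|S|$ forces $q>R$, and in that regime your upper bound degrades.

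There is also the secondary gap you flag yourself: when all odd components are smaller than $R$, you need the components outside $\{C_1,\ldots,C_{k+1}\}$ (together with even components) to have total size at least $R$, i.e.\ $|V|-|S|-(c_1+\cdots+c_{k+1})\geq R$. Since $c_1+\cdots+c_{k+1}$ can be close to $2R$, this needs roughly $|S|\leq|V|-3R$, which is not available a priori for $c=2$. Scenarios that violate this \emph{are} in fact excluded by the expander hypothesis (e.g.\ the union $C_1\cup\cdots\cup C_k$ forces $|S|\geq c(c_1+\cdots+c_k)$), but you have not carried out that verification, and it interacts with the sizes $c_{k+1},c_q$ in a way that requires care.

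The paper's proof avoids both issues by organising the case analysis differently. It works with \emph{all} components $S_1,\ldots,S_t$ of $G-S$ (not only odd ones), proves $t\leq|S|$, and splits according to whether some union of components has size in the interval $(|S|/c,\,R]$. If yes, expansion gives an immediate contradiction; if no, there is an index $j^*$ with $\sum_{i\leq j^*}|S_i|\leq|S|/c$ and $|S_i|>R-|S|/c$ for $i>j^*$. When $j^*\leq t-2$ the two large components play the role of your $A,B$ automatically (no packing needed), forcing $|S|\geq Rc/2$; then the slack $|V|<2Rc-8c$ is used to guarantee $R-|S|/c>4$, hence $|S_i|\geq5$ for $i>j^*$, and a short counting finishes. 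This dichotomy sidesteps both the $q>R$ problem and the packing of $B$.
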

\begin{proof}
From the conditions on $R$ and $c$ it follows that $Rc>|V|/2$ and, combined with $G$ being an $(R,c)$-expander, this trivially implies that the graph $G$ must be connected. Setting $S=\emptyset$, we have that $o(G-S)=1$ for odd $|V|$, and that $o(G-S)=0$ for even $|V|$. By Theorem~\ref{t:BerTut58} we can thus assume that $S\neq\emptyset$. We will in fact prove that  $|S| \geq c(G-S)$ holds for every non-empty $S \subseteq V$. It clearly suffices to prove this for every $\emptyset\neq S\subseteq V$ of cardinality $|S| \leq |V|/2$. Let $S$ be such a set, let $t=c(G-S)$, and let $S_1,\ldots, S_t$ denote the connected components of $G-S$, where $1\leq|S_1|\leq\ldots\leq|S_t|$. Assume first that there exists a set $A \subseteq \{1,\ldots,t\}$ such that $|S|/c<\left|\bigcup_{i \in A} S_i\right|\leq R$. By definition we have $N_G(\bigcup_{i\in A}S_i)\subseteq S$. It follows that $|S|\geq|N_G(\bigcup_{i \in A} S_i)|\geq c\left|\bigcup_{i \in A} S_i\right|>|S|$, which is clearly a contradiction. Hence, no such $A \subseteq \{1, \ldots, t\}$ exists. It follows that there must exist some $0 \leq j^* \leq t$ such that $\sum_{i=1}^{j^*} |S_i|\leq\lfloor |S|/c\rfloor$ and $|S_i|>R-|S|/c$ for every $j^*<i\leq t$. If $j^*\geq t-1$, then, since $|S_i|\geq 1$ for every $1\leq i\leq t$, it follows that $t\leq\sum_{i=1}^{t-1}|S_i|+1\leq\lfloor|S|/c\rfloor+1 \leq|S|$. Hence, we can assume that $j^*\leq t-2$. We claim that, under this assumption, $|S|\geq\frac{Rc}{2}$. Indeed, assume for the sake of contradiction that $1\leq|S|<\frac{Rc}{2}$ or equivalently, that $c(R-|S|/c)>|S|$. If $R-|S|/c\leq|S_{j^*+1}|\leq R$, then, as $S\supseteq N_{G}(S_{j^*+1})$ we have that $|S|\geq |N_{G}(S_{j^*+1})|\geq c(R-|S|/c)>|S|$, a contradiction. Therefore, $|S_i|>R$ for every $j^*<i\leq t$. Since $j^*\leq t-2$, for $i\in\{t-1,t\}$, we can choose $A_i\subseteq S_i$ to be an arbitrary subset of size $R$. It follows that $|V| \geq |S_{t-1}\cup S_t\cup N_G(S_{t-1})\cup N_G(S_t)|\geq |N_G(A_{t-1})\cup N_G(A_t)|=|N_G(A_{t-1})|+|N_G(A_t)|-|N_G(A_{t-1})\cap N_G(A_t)|\geq 2Rc-|S|>|V|$, which is, again, clearly a contradiction. We deduce that $|V|/4<Rc/2\leq|S|\leq|V|/2<Rc$. Note that under our assumption on $R$ and $c$ we have that $R-|S|/c>4$, and therefore $|S_i|\geq 5$ for every $j^*<i\leq t$. Moreover, since $|S_i| \geq 1$ holds for every $1 \leq i \leq j^*$, it follows that $j^*\leq |S|/c$. Putting everything together we have that $|S|>\frac{1}{3}\sum_{i=1}^t |S_i|\geq \frac{j^*+(t-j^*)(R-|S|/c)}{3}\geq \frac{5t-4j^*}{3}$, and therefore $t<\frac{|S|}{5}(3+\frac{4}{c})\leq |S|$. This concludes the proof of the lemma.
\end{proof}
In order to prove Theorem \ref{t:perf_match_game_hittime} we proceed
very similarly to the proof of Theorem \ref{t:kconn_game_hittime}.
\begin{proof}[Proof of Theorem \ref{t:perf_match_game_hittime}]
Let $\tG=\{G_i\}_{i=0}^{\binom{n}{2}}$ denote the random graph process. Set $M=\tau(\tG;\delta_{2})$, let $G=G_M$, $\Small=\mD_{\ln^{0.9}n}(G)$, $G'=G[V\setminus\Small]$ and denote by $n'$ the number of vertices in $G'$. Setting $c=8$, and $R=\frac{n'}{10}$, the conditions of Lemma \ref{l:splitGNM_noSMALL} are met, and thus $G'\in\mM_{\mX_{\frac{n'}{10},8}}$.

Maker's strategy is quite natural. He splits the board into $F_1=E(G')$ and $F_2=E_G(\Small,V\setminus \Small)$, and plays the corresponding two games in parallel, that is, in each move Maker will claim an edge of the board Breaker chose his last edge from (except for possibly his last move in one of the two games). Playing on the edges of $F_1$, Maker aims to build an $(n'/10,8)$-expander. As noted above, Maker has a winning strategy for this game. We denote the restriction of the graph built by Maker by the end of the game to the edges of $F_1$ by $H_1$. Playing on the edges of $F_2$, Maker follows a simple pairing strategy which guarantees that, by the end of the game, the graph $H_2$ which Maker constructs will satisfy $d_{H_2}(v)\geq\lfloor d_G(v)/2\rfloor$ for every $v\in\Small$. To achieve this goal, whenever Breaker claims an edge which is incident with some vertex $v\in\Small$, Maker responds by claiming a different edge incident with $v$ if such an edge exists, and otherwise he claims an arbitrary free edge of $F_1\cup F_2$. Recalling Claim \ref{c:Small-properties} we can assume that $\Small$ is an independent set in $G$ and that no two vertices in $\Small$ share a common neighbor. As the minimum degree in $G$ is $2$, Maker's graph, $H=H_1\cup H_2$, will contain at least one edge emitting out of every vertex in $\Small$, each incident with a different vertex of $V\setminus\Small$. Therefore, there exists a matching in $\mM$ which covers all vertices of $\Small$. Let $T$ denote the set of vertices of $V \setminus \Small$ which are covered by $\mM$. Again, by Claim \ref{c:Small-properties} we can assume that no two vertices in $T$ share a common neighbor (as this would create a path of length $4$ between two vertices in $\Small$). Since, the graph $H_2$ is an $(n'/10,8)$-expander, it follows by Claim \ref{c:indsetremovalfromexpander} that the graph $H' = H_2 \setminus T$ is an $(n'/10,7)$-expander. The values $R=n'/10$ and $c=7$ satisfy the condition of Lemma \ref{l:expansionToMatching}, implying that $H'\in\PerfMatch$. Let $\mM'$ be some perfect matching of $H'$, then $\mM \cup \mM'$ is a perfect matching of $H$. This concludes the proof of the theorem.
\end{proof}

\section{Hitting time of the Hamiltonicity game}\label{s:HitTimeHam}
Our proof of Theorem \ref{t:hamilton_game_hittime} is fairly similar to the two proofs presented in the previous section. However, having built an appropriate expander, Maker will need to claim additional edges in order to transform his expander into a Hamiltonian graph. In order to describe the relevant connection between Hamiltonicity and $(R,c)$-expanders, we require the notion of \emph{boosters}.
\begin{defn}\label{d:booster}
For every graph $G$, we say that a non-edge $\{u,v\}\notin E(G)$ is a \emph{booster} with respect to $G$, if either $G \cup \{u,v\}$ is Hamiltonian or $\ell(G \cup \{u,v\})>\ell(G)$. We denote by $\mB_G$ the set of boosters with respect to $G$.
\end{defn}
The following is a well-known property of $(R,2)$-expanders (see e.g. \cite{FriKri2008}).
\begin{lem}\label{l:expander_booster}
If $G$ is a connected non-Hamiltonian $(R,2)$-expander, then $|\mB_G|\geq R^2/2$.
\end{lem}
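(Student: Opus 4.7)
The plan is to use P\'osa's rotation-extension technique applied to a longest path of $G$. Let $P = v_0 v_1 \cdots v_\ell$ be a longest path, of length $\ell = \ell(G)$. Since $G$ is non-Hamiltonian and $P$ is longest, no vertex outside $V(P)$ is adjacent to $v_0$ or $v_\ell$ (otherwise we could extend $P$), and moreover we cannot close $P$ into a Hamilton cycle. The notion of a booster is tailored to exactly this setting: any non-edge $\{v,w\}$ such that $v,w$ are endpoints of a longest path of length $\ell$ must be a booster, because adding $\{v,w\}$ creates a cycle of length $\ell+1$; if this cycle is spanning we obtain a Hamilton cycle, and otherwise connectivity of $G$ supplies an edge from the cycle to an outside vertex, yielding a path strictly longer than $P$.

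First I would perform a round of P\'osa rotations keeping $v_0$ fixed: whenever the current endpoint $u$ has a neighbor $v_i$ in the path with $i < \ell - 1$, replace the terminal segment to obtain a new longest path with a new endpoint. Let $T$ be the set of all endpoints produced by iterating this procedure. The key input is P\'osa's rotation lemma, which states that $|N_G(T) \setminus T| < 2|T|$; combining this with the $(R,2)$-expansion hypothesis forces $|T| \geq R + 1$, since otherwise $|N_G(T)| \geq 2|T|$ would contradict P\'osa. Next, for each $v \in T$ fix the longest path $P_v$ from $v_0$ to $v$ and repeat the rotation procedure, now keeping $v$ fixed and rotating at $v_0$. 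The same reasoning produces a set $T_v$ of at least $R+1$ second-endpoints paired with $v$.

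To finish, I would count: for every ordered pair $(v,w)$ with $v \in T$ and $w \in T_v \setminus \{v\}$, the pair $\{v,w\}$ is a pair of endpoints of a longest path. By the dichotomy of the first paragraph, $\{v,w\}$ cannot be an edge of $G$ (else $G$ would be Hamiltonian or $P$ would not be longest), so $\{v,w\} \in \mB_G$. Since each $v \in T$ contributes at least $R$ distinct partners, the number of unordered booster pairs is at least $|T|\cdot R / 2 \geq (R+1)R/2 \geq R^2/2$, which is the desired bound. The main obstacle is the careful bookkeeping behind P\'osa's lemma, i.e.\ verifying that neighbors of $T$ outside $T$ are confined to at most $2|T|-1$ positions along the rotated paths; once this is taken from \cite{FriKri2008} as a black box, the remainder is the expansion/counting argument above together with the connectivity-plus-non-Hamiltonicity case analysis that upgrades ``endpoint pair of a longest path'' to ``booster.''
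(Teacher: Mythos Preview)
Your proposal is correct and is precisely the standard P\'osa rotation--extension argument. The paper does not give its own proof of this lemma; it simply cites it as well known (referring to \cite{FriKri2008}), and the argument you sketch is exactly the one found there.
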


Our goal is to show that during a game on an appropriate graph $G$, assuming Maker can build a subgraph of $G$ which is an $(R,c)$-expander, he can also claim sufficiently many such boosters, so that his $(R,c)$-expander becomes Hamiltonian. In order to do so, we further analyze the structure of the random graph process.
\begin{lem}\label{l:GNM_NoGammaDestroy}
If $\tG=\{G_i\}_{i=0}^{\binom{n}{2}}$ is the random graph process and $M=\tau(\tG;\delta_4)$, then w.h.p. $G_M$ does not contain a connected non-Hamiltonian $(n/5,2)$-expander $\Gamma$ with at most $n\ln^{0.98}n$ edges such that $|E(G_M)\cap\mB_{\Gamma}|\leq \frac{n\ln n}{100}$.
\end{lem}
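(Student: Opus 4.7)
The plan is to pass from $\GNM$ to $\GNP$ via Claim \ref{c:GNPtoGNM} (the bad event is not monotone, so Proposition \ref{p:MonoGNPtoGNM} does not apply) and then to union-bound over all candidate witnesses $\Gamma$. By Lemma \ref{l:hittime_mindeg}, we have $\Prob{\tau(\tG;\delta_4) \notin (m_4, M_4)} = o(1)$, so
$$\Prob{G_\tau \text{ contains a bad } \Gamma} \leq o(1) + \sum_{m_4 < M < M_4}\Prob{G_M \text{ contains a bad } \Gamma},$$
and since the outer sum has only $O(n\ln n)$ terms, it suffices to prove an exponentially small bound on $\Prob{G_M \text{ contains a bad } \Gamma}$ for each fixed $M$ in that range. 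For each such $M$, set $p = M/\binom{n}{2} \in (\ln n/n, 2\ln n/n)$; by Claim \ref{c:GNPtoGNM} it is enough to bound the analogous probability for $\GNP$, as the lossy factor $\sqrt{2\pi M} = O(\sqrt{n\ln n})$ is negligible against an exponentially small probability.

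Fix a candidate witness $\Gamma$ on $V$: a connected non-Hamiltonian $(n/5,2)$-expander with $e(\Gamma) = m \leq n\ln^{0.98}n$. The crucial observation is independence: since the elements of $\mB_\Gamma$ are, by Definition \ref{d:booster}, non-edges of $\Gamma$, the event $\{\Gamma \subseteq \GNP\}$ depends only on the pairs in $E(\Gamma)$, whereas the event $\{|E(\GNP)\cap\mB_\Gamma| \leq \frac{n\ln n}{100}\}$ depends only on the pairs in $\binom{V}{2}\setminus E(\Gamma)$; hence they are independent. The first event has probability exactly $p^m$. For the second, Lemma \ref{l:expander_booster} yields $|\mB_\Gamma| \geq (n/5)^2/2 = n^2/50$, so $|E(\GNP)\cap\mB_\Gamma| \sim \Bin(|\mB_\Gamma|, p)$ has expectation at least $n\ln n/50$; the lower-tail Chernoff bound from Theorem \ref{t:Chernoff} with $\varepsilon = 1/2$ then gives a probability of at most $\exp(-n\ln n/400)$.

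Grouping candidate $\Gamma$'s by $m$, the union bound produces
$$\Prob{\GNP \text{ contains a bad } \Gamma} \leq \exp\left(-\frac{n\ln n}{400}\right)\sum_{m \leq n\ln^{0.98}n}\binom{\binom{n}{2}}{m} p^m.$$
Using $\binom{N}{m}p^m \leq (eNp/m)^m$ with $N = \binom{n}{2}$, and noting that the function $m\mapsto m\ln(eNp/m)$ is increasing on $[1,Np]$ while $Np \leq n\ln n$, the summand is maximized at $m = n\ln^{0.98}n$ with value $\exp(O(n\ln^{0.98}n \cdot \ln\ln n))$. Since $\ln^{0.98}n\cdot\ln\ln n = o(\ln n)$, the whole product is $\exp(-\Omega(n\ln n))$, which comfortably dominates the $O(\sqrt{n\ln n})$ factor from Claim \ref{c:GNPtoGNM} and the $O(n\ln n)$ factor from summing over $M$. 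The main obstacle is precisely this quantitative balance in the last step: the enumeration of $\Gamma$'s contributes an entropy of order $m\ln\ln n$, whereas the Chernoff saving on the booster count is of order $n\ln n$; the sparsity threshold $n\ln^{0.98}n$ is tailored so that the exponent $0.98 < 1$ leaves exactly the logarithmic slack needed to overcome the $\ln\ln n$ loss.
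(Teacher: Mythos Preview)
Your proposal is correct and follows essentially the same route as the paper: reduce to a fixed $M\in(m_4,M_4)$ via Lemma~\ref{l:hittime_mindeg}, pass to $\GNP$ via Claim~\ref{c:GNPtoGNM}, exploit the independence between $\{\Gamma\subseteq\GNP\}$ and the booster count (using that boosters are non-edges of $\Gamma$), apply Lemma~\ref{l:expander_booster} and Chernoff, and then union-bound over all sparse $\Gamma$ using $\binom{N}{m}p^m\leq(eNp/m)^m$. The paper carries out the same computation with the Chernoff saving written as $\exp(-n^2p/400)$ rather than your $\exp(-n\ln n/400)$, and bounds the sum directly rather than arguing the maximum occurs at the endpoint, but these are cosmetic differences.
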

\begin{proof}
First we note that any $(n/5,2)$-expander must be connected, as each connected component must be of size at least $n/5 + 2n/5 > n/2$. Let $m_4\leq M'\leq M_4$ be an integer, let $p=M'/\binom{n}{2}>\frac{\ln n}{n}$, and let $G=(V,E) \sim \GNP$. Our goal is to prove that the probability that $G$ contains a connected non-Hamiltonian $(n/5,2)$-expander subgraph $\Gamma$ with at most $n\ln^{0.98}n$ edges such that $|E \cap \mB_{\Gamma}|\leq n\ln^{0.98}n$ is ``much smaller'' than the probability that $e(G)=M'$. Summing over all integral values of $M'$ in the interval $[m_4, M_4]$, and applying Claim \ref{c:GNPtoGNM} to each of these values, will enable us to complete the proof.

Let $\mS$ denote the set of all labeled non-Hamiltonian $(n/5,2)$-expanders on the vertex set $V$ which have at most $n\ln^{0.98}n$ edges. Fix a graph $\Gamma=(V,F)\in\mS$, then clearly $\Prob{\Gamma\subseteq G}=p^{|F|}$. Now, let $G'=(V,E\setminus F)\sim\GNP_{-F}$. By definition, every booster with respect to $\Gamma$ is a non-edge in $\Gamma$, hence $\mB_\Gamma$ is a subset of the potential pairs of the graph $G'$. Lemma \ref{l:expander_booster} implies that $|\mB_{\Gamma}|\geq n^2/50$, and since $|E(G')\cap\mB_{\Gamma}|\sim\Bin(|\mB_{\Gamma}|,p)$, it follows that $\Exp{|E(G')\cap\mB_{\Gamma}|}\geq\frac{n^2p}{50}>\frac{n\ln n}{50}$. Applying Theorem \ref{t:Chernoff} we have
\begin{equation*}
\Prob{|E(G')\cap\mB_{\Gamma}|\leq\frac{n\ln n}{100}}\leq\exp\left(-\frac{\left(1-\frac{50}{100}\right)^2n^2p}{100}\right)=\exp\left(-\frac{n^2p}{400}\right).
\end{equation*}
Next, we note that by the independence of appearance of edges in $\GNP$, the event $\Gamma\subseteq G$ and the event that some booster $e$ with respect to $\Gamma$ was chosen among the edges of $G'$, are independent events. We can thus use a union bound argument by going over all $\Gamma\in\mS$ to upper bound the probability that $G$ contains a connected non-Hamiltonian $(n/5,2)$-expander $\Gamma$ with at most $n\ln^{0.98}n$ edges, such that $|E \cap \mB_{\Gamma}|\leq\frac{n\ln n}{100}$ as follows
\begin{eqnarray*}
&&\sum_{m=1}^{n\ln^{0.98}n}\binom{\binom{n}{2}}{m}p^m\cdot\exp\left(-\frac{n^2p}{400}\right)\\
&\leq& \sum_{m=1}^{n\ln^{0.98}n}\left(\frac{e n^2 p}{2m}\right)^m \cdot\exp\left(-\frac{n^2p}{400}\right)\\
&\leq& \sum_{m=1}^{n\ln^{0.98}n}\exp\left(m\cdot\left(1+\ln\left(\frac{n^2p}{2m}\right)\right)-\frac{n^2p}{400}\right)\\
&\leq& \exp\left(-\frac{n^2p}{401}\right).
\end{eqnarray*}
Using Claim \ref{c:GNPtoGNM}, the above calculation implies that the same event, with $G\sim\GNp{M'}$, is upper bounded by $\sqrt{2\pi M'} \cdot \exp\left(-\frac{n^2p}{401}\right)  \leq \exp \left(\frac{-n\ln n}{401}\right)$. Taking the union bound over all integral values of $m_4\leq M'\leq M_4$, we conclude that the probability there exists such an integer $M'$ for which $G_{M'}$ violates the claim is at most $(M_4 - m_4 + 1) \cdot \exp \left(\frac{-n\ln n}{401}\right) \leq n\ln\ln\ln n \cdot \exp \left(\frac{-n\ln n}{401}\right) = o(1)$.
\end{proof}

We are now ready to present the full proof of Theorem \ref{t:hamilton_game_hittime}.
\begin{proof}[Proof of Theorem \ref{t:hamilton_game_hittime}]
Let $\tG=\{G_i\}_{i=0}^{\binom{n}{2}}$ denote the random graph process. Set $M=\tau(\tG;\delta_{4})$, let $G=G_M$, $\Small=\mD_{\ln^{0.9}n}(G)$, $G'=G[V\setminus\Small]$ and denote by $n'$ the number of vertices in $G'$. By Claim~\ref{c:Small-size} we can assume that $|\Small| \leq n^{0.3}$. Setting $c=3$, and $R=\frac{9n'}{40}$, the conditions of Lemma \ref{l:splitGNM_noSMALL} are met, and thus there exists a subgraph $\hG\subseteq G'$ such that $\hG\in\mM_{\mX_{\frac{9n'}{40},3}}$ and $e(\hG)\leq 2n'\ln^{0.97}n'$.

Maker's strategy is quite natural. It consists of two phases. Let $e_i$ denote the edge selected by Maker in his $i$th move and let $H_i=(V,\{e_1,\ldots,e_i\})$ denote the graph Maker has built during his first $i$ moves. Let $H'$ denote Maker's graph at the end of the first phase and let $H$ denote Maker's graph at the end of the second phase, that is, Maker's final graph. Before the game starts, Maker splits the board $E(G)$ into three parts $F_1=E(\hG)$, $F_2=E_G(\Small,V\setminus \Small)$ and $F_3=E(G'\setminus \hG)$. During the first phase, Maker plays two games in parallel, one on $F_1$ and the other on $F_2$. For every $j \geq 1$, on his $j$th move of the first phase, Maker claims an edge of $F_1 \cup F_2$, according to his strategy for each of the two games. If on his $j$th move Breaker claims an edge of $F_i$, for some $i \in \{1,2\}$, then Maker claims an edge of $F_i$ as well (unless he has already achieved his goal in the game on $F_i$). If Breaker claims an edge of $F_3$, them Maker claims an edge of $F_1 \cup F_2$. Playing on the edges of $F_1$, Maker aims to build a $(9n'/40,3)$-expander, $H'_1$. As noted above, Maker has a winning strategy for this game. Moreover, since $|F_1| \leq 2n'\ln^{0.97}n'$, Maker can build such an expander within at most $t_{1,1} := n'\ln^{0.97}n'$ moves. Playing on the edges of $F_2$, Maker follows a simple pairing strategy which guarantees that, by the end of the game, the graph, $H'_2$, which Maker constructs, will satisfy $d_{H'_2}(v)\geq 2$ for every $v\in\Small$. To achieve this goal, whenever Breaker claims an edge which is incident with some vertex $v\in\Small$, Maker responds by claiming a different edge incident with $v$, unless his current graph already contains two edges which are incident with $v$ in which case he claims another free edge of $F_1 \cup F_2$ which brings him closer to his goal in the corresponding game. Hence, the number of moves required for Maker to reach his goal in the game on $F_2$ is at most $t_{1,2} := 2|\Small| \leq 2n^{0.3}$. It follows by Claim \ref{c:Small-properties} that $\Small$ is an independent set and that no two edges emitting from $\Small$ are incident with the same vertex of $V \setminus \Small$. Hence, Maker's graph, $H'_2$, satisfies $N_{H'_2}(U')\geq 2|U'|$ for every $U'\subseteq\Small$. Applying Claim \ref{c:indsetaddtoexpander} and noting that $9n'/40 \geq n/5$, it follows that $H'=H'_1\cup H'_2$ is an $(n/5,2)$-expander. Clearly, Maker's final graph $H$ is an $(n/5,2)$-expander as well. A crucial point to keep in mind is that the number of moves required for Maker to construct his $(n/5,2)$-expander $H'$, is $t_1 = t_{1,1}+t_{1,2} = o(n\ln^{0.98}n)$.

After having completed the construction of $H'$, Maker proceeds to the second phase of his strategy. Let $t_2 \leq n$ denote the number of moves Maker plays during the second phase. For every $t_1 < j \leq t_1 + t_2$, on his $j$th move, Maker claims an edge of $G$ which is a booster with respect to $H_{j-1}$. This is possible since, throughout the game Breaker claims at most $t_1 + t_2 \leq t_1 + n$ edges of $G$, but by Lemma \ref{l:GNM_NoGammaDestroy}, w.h.p. either $H_{j-1}$ is Hamiltonian or it has at least $n \ln n/100 > t_1 + n$ boosters among the edges of $G$. It follows by the definition of a booster that either $H_j$ is Hamiltonian or $\ell(H_j) > \ell(H_{j-1})$. Repeating the same argument $t_2 \leq n$ times, we conclude that $H$ is Hamiltonian as claimed.
\end{proof}

\section{Remarks on possible generalizations}\label{s:Generalizations}
We note that, by using a slight modification of our proofs, Theorems \ref{t:perf_match_game_hittime} and \ref{t:hamilton_game_hittime} can in fact be extended. For every positive integer $k\geq 1$, let $\PerfMatch^k$ and $\Hamiltonicity^k$ denote the graph properties of admitting $k$ pairwise edge-disjoint perfect matchings, and $k$ pairwise edge-disjoint Hamilton cycles respectively.
\begin{our_thm}\label{t:k-perf_match_game_hittime}
For every fixed integer $k\geq 1$, if $\tG$ is the random graph process, then w.h.p.
$$\tau(\tG;\mM_{\PerfMatch^k})=\tau(\tG;\delta_{2k}).$$
\end{our_thm}
\begin{our_thm}\label{t:k-hamilton_game_hittime}
For every fixed integer $k\geq 1$, if $\tG$ is the random graph process, then w.h.p.
$$\tau(\tG;\mM_{\Hamiltonicity^k})=\tau(\tG;\delta_{4k}).$$
\end{our_thm}
Theorem \ref{t:k-hamilton_game_hittime} can be viewed as a Combinatorial game analog of the classical result of Bollob\'{a}s and Frieze \cite{BolFri85} who proved that w.h.p. $\tau(\tG;\Hamiltonicity^k)=\tau(\tG;\delta_{2k})$ (see also \cite{FriKri2008} for an extension to non-constant minimum degree in the $\GNP$ model).

We now sketch how the proof of Theorem \ref{t:hamilton_game_hittime} can be adapted so as to entail Theorem \ref{t:k-hamilton_game_hittime}. Similarly, the proof of Theorem \ref{t:k-perf_match_game_hittime} can be obtained using appropriate modifications to the proof of Theorem \ref{t:perf_match_game_hittime}, but as this case is simpler, we omit the details.

It suffices to prove that when removing all vertices of degree at most $\ln^{0.9}n$ from the random graph $\GNM$, where $M = \tau(\tG; \delta_{4k})$, playing on this subgraph $G'$ on $n'$ vertices, w.h.p. Maker can quickly (that is, within $o(n' \ln n')$ moves) build an $(9n'/40k,3k)$-expander $H'$ for which the property \textbf{M\ref{i:rcexpander_suffcondition_bigsets}} with $r=n'/ln^{0.4}n'$ holds. Moreover, at the same time, Maker can ensure that the minimum degree of his graph will be at least $2k$. After the removal of $0\leq i\leq k-1$ edge-disjoint Hamilton cycles from the original graph we have removed a $2i$-regular graph from from $H'$ and are left with a graph $\hH_i$ (which is spanned by the vertices which are not in $\Small$) for which $|N_{\hH_i}(U)|\geq 3k|U|-2i|U|\geq (k+2)|U|$ for every $U\subseteq V(H')$ of cardinality $|U|\leq 9n'/40k$. To complete the proof it is left to note that for the choice of the parameter $r$ guarantees that between sets of linear size there is a super-linear number of edges. It is not hard to see that adding back the vertices of $\Small$ who are all incident to at least $2k-2i\geq 2$ edges results in a connected $(n/5,2)$-expander. This graph has many boosters which Breaker could not have taken them all, and  Maker can thus continue playing for another Hamilton cycle using the boosters left in the graph. As there is a super-linear number of boosters and Breaker can claim at most $n$ of them per Hamilton cycle, Maker can keep playing this way until he completely saturates his vertices of minimum degree.

\subsection*{Acknowledgments}
This research was partially conducted while the authors were present (as guests or members) at the Institute of Theoretical Computer Science at ETH Z\"{u}rich. We would like to thank Angelika Steger and her group for the support and wonderful facilities provided during this time.
\bibliographystyle{abbrv}
\bibliography{CombGames}
\end{document}